\documentclass[12pt,oneside,reqno]{amsart}
\addtolength{\oddsidemargin}{-.4in}
\addtolength{\evensidemargin}{-.4in}
\addtolength{\textwidth}{1.0in} \addtolength{\textheight}{1in}
\usepackage{amsmath,amssymb}
\usepackage{amscd}

\theoremstyle{plain}
\newtheorem{thm}{Theorem}[section]
\newtheorem{theorem}[thm]{Theorem}
\newtheorem*{mainthm}{Main Theorem}

\newtheorem{lemma}[thm]{Lemma}

\newtheorem{proposition}[thm]{Proposition}
\theoremstyle{definition}

\newtheorem{remark}[thm]{Remark}

\numberwithin{equation}{section}


\newcommand{\C}{{\mathbb C}}

\newcommand{\Q}{{\mathbb Q}}
\newcommand{\R}{{\mathbb R}}

\newcommand{\Z}{{\mathbb Z}}

\newcommand{\id}{{\rm id}}
\newcommand{\tr}{{\rm trace}}

\newcommand{\Aut}{{\operatorname{Aut}}}
\newcommand{\ord}{{\operatorname{ord}}}
\def\rank{\operatorname{rank}}
\newcommand{\disc}{{\operatorname{disc}}}

\newcommand{\Lprime}{\mathbb{L}^{\star}_{(1,1)}}

\title[K3 surfaces and Pell equation]{K3 surfaces with Picard number 2, Salem Polynomials and Pell equation}
\author{Kenji Hashimoto, JongHae Keum and Kwangwoo Lee}
\date{\today}
\begin{document}

\begin{abstract}
If an automorphism of a projective K3 surface
 with Picard number $2$ is of infinite order, then the automorphism
 corresponds to a solution of Pell equation.
In this paper, by solving this equation, we determine all Salem polynomials of symplectic and anti-symplectic automorphisms of projective K3 surfaces with Picard number $2$.
\end{abstract}

\maketitle  \setcounter{tocdepth}{1}

\section{Introduction}
A compact complex surface $X$
 is called a K3 surface if it is simply connected and
 has a nowhere vanishing holomorphic $2$-from $\omega_X$.
 By the global Torelli theorem the automorphism group of a K3 surface is determined, up to subquotient of finite index, by its Picard lattice. Suppose that a K3 surface $X$ is projective and has Picard number $2$.  Galluzzi, Lombardo and Peters \cite{GLP}
 applied the classical theory of binary quadratic forms (cf.\ \cite{D,J}) to prove that the automorphism group Aut$(X)$ is finite cyclic, infinite cyclic or infinite dihedral.

Suppose that
 $\varphi \in \Aut(X)$ has infinite order and is either symplectic or anti-symplectic,
 where we have $\varphi^* \omega_X =\varepsilon\omega_X$ and
 $\varphi$ is said to be symplectic (resp.\ anti-symplectic)
 if $\varepsilon=1$ (resp. $\varepsilon=-1$).
In the present paper,
 we determine all Salem polynomials of such automorphisms. The Salem polynomial of $\varphi$
 is defined as the characteristic polynomial of $\varphi^*|_{S_X}$ in this case.
Here $S_X$ denotes the Picard lattice of $X$.
Since $\det(\varphi^*|_{S_X})=1$,
 it is enough to determine $\tr(\varphi^*|_{S_X})$, and
 the Salem polynomial of $\varphi$ is given as
\begin{equation}
 x^2-t x+1, \quad t=\tr(\varphi^*|_{S_X}).
\end{equation}
We construct an automorphism $\varphi$ whose trace on $S_X$ is as in the condition (2) in Main Theorem below.
We use a result on class numbers of real quadratic fields for the construction.

We apply the theory of binary quadratic forms to study actions on $S_X$ (cf.\ \cite{GLP}).
The (induced) action $\varphi^*|_{S_X}$ of
 $\varphi$ on $S_X$ as above
 corresponds to an integer solution $(u,v)$
 to the Pell equation
\begin{equation}\label{pell}
 u^2-D v^2=4, \quad D:=-\disc(S_X) >0.
\end{equation}
More precisely, let
\begin{equation}
 \begin{pmatrix}2a&b\\b&2c\end{pmatrix}, \quad
 a,b,c \in \Z,
\end{equation}
 be the intersection matrix of $S_X$
 (under some basis).
Then $D=b^2-4ac$ and $\varphi^*|_{S_X}$
 is represented as the matrix
\begin{equation}
 g_{u,v}=
 \begin{pmatrix}
  (u-bv)/2 & -cv \\
  av & (u+bv)/2
 \end{pmatrix},
\end{equation}
 where $(u,v)$ is an integer solution to the Pell equation (\ref{pell})
 with $u>0$ (see Proposition \ref{PROP_generator_of_isometry}).
Moreover, we show
\begin{equation}
 (u,v)=(\alpha^2-2 \varepsilon, \alpha \beta), \quad
 \alpha^2-D \beta^2=4 \varepsilon
\end{equation}
 for some integers $\alpha,\beta$
 (see Proposition \ref{PROP_key_prop}).
In particular, we have $g_{u,v}=g_{\alpha,\beta}^2$.
%
Our goal is to determine all possible values of
\begin{equation}
 \tr(\varphi^*|_{S_X})=u=\alpha^2-2 \varepsilon.
\end{equation}
The result is stated as follows:

\begin{mainthm} \label{THM_main}
For $\varepsilon\in \{ \pm 1 \}$ and $u\in\Z$,
 the following conditions are equivalent:
\begin{enumerate}
\item
$u=\tr(\varphi^*|_{S_X})$
 for some automorphism $\varphi$
 of a projective K3 surface $X$ with Picard number $2$ such that
 $\ord(\varphi)=\infty$ and
 $\varphi^* \omega_X=\varepsilon \omega_X$.
\item
$u=\alpha^2-2 \varepsilon$ for some $\alpha \in A_\varepsilon$,
 where
\begin{equation}
 A_\varepsilon=
 \begin{cases}
  \Z_{\geq 4} & \text{if\ \ } \varepsilon=1, \\
  \Z_{\geq 4} \setminus \{ 5,7,13,17 \} & \text{if\ \ } \varepsilon=-1.
 \end{cases}
\end{equation}
\end{enumerate}
\end{mainthm}


%
\begin{remark}
%
(i)
For a given trace $u$ as in Main Theorem,
we have only finitely many pairs $(X, \varphi)$ with $\tr(\varphi^*|_{S_X})=u$, up to equivariant deformation. Indeed, for a given $u$
there are only finitely many $D$'s satisfying the Pell equation ($\ref{pell}$) for some integer $v$, and for a given $D$ there are only finitely many lattices of rank $2$ with discriminant $-D$ up to isomorphism (see \cite[Chap.\ 15]{CS}). Each of such lattices corresponds to a connected moduli space (of dimension $20-\rank S_X=18$) of
 $(X,\varphi)$ such that $S_X$ is isomorphic to the given lattice.
(ii)
An automorphism of infinite order of a projective K3 surface $X$ with Picard number $2$ always acts on the Picard lattice $S_X$ with determinant $1$.
\end{remark}
As an application, we show that every projective K3 surface with Picard number $2$ admitting a fixed-point-free automorphism must have the same Picard lattice as the K3 surfaces with the Cayley--Oguiso automorphism \cite{O,Cayley-Oguiso-auto}, and that the action of the automorphism on the Picard lattice is the same as that of the Cayley--Oguiso automorphism
 (see Section \ref{SECT_app}).

The structure of this paper is as follows:
We recall some results
 on lattices and their isometries in Section \ref{SECT_lattices},
 and K3 surfaces and their automorphisms in Section \ref{SECT_k3}.
We prove Main Theorem in Section \ref{SECT_proof}.
Finally, in Section \ref{SECT_app},
 a few applications of our result are discussed.

\subsection*{Acknowledgment}
This work was supported by National Research Foundation of Korea (NRF-2007-0093858).
The first author is partially supported by JSPS Grant-in-Aid for Young Scientists (B), No.\ 17K14156.
We would like to thank Professor Shigeru Mukai for useful discussion.

\section{Lattices} \label{SECT_lattices}


A lattice
 is a free $\Z$-module $L$
 of finite rank equipped with a symmetric bilinear form
 $\langle ~,~ \rangle \colon L\times L\rightarrow \Z$.
If $x^2:=\langle x,x\rangle \in 2\Z$ for any $x\in L$,
 a lattice $L$ is said to be even.
We fix a $\Z$-basis of $L$ and identify the lattice $L$
 with its Gram matrix $Q_L$ under this basis.
The discriminant $\disc(L)$ of $L$ is defined as $\det(Q_L)$,
 which is independent of the choice of basis.
A lattice $L$ is called non-degenerate if
 $\disc(L)\neq 0$ and unimodular if $\disc(L)=\pm1$.
For a non-degenerate lattice $L$,
 the signature of $L$ is defined as $(s_+,s_-)$,
 where $s_{+}$ (resp.\ $s_-$)
 denotes the number of the positive (resp.\ negative) eigenvalues of
 $Q_L$.
An isometry of $L$ is an isomorphism of the $\Z$-module $L$
 preserving the bilinear form.
The orthogonal group $O(L)$ of $L$ consists of the isometries of $L$.
We consider $L$ as $\Z^n$ (column vectors) with Gram matrix $Q_L$
 and use the following identification:
\begin{equation} \label{EQ_ortho}
 O(L)= \{ g\in GL_n(\Z) \bigm| g^T \cdot Q_L \cdot g=Q_L \}, \quad
 n=\rank L.
\end{equation}
For a non-degenerate lattice $L$,
 the discriminant group $A(L)$ of $L$ is defined by
\begin{equation} 
 A(L):=L^*/L, \quad
 L^*:=
 \{ x\in L\otimes \Q \bigm|
 \langle x,y \rangle \in \Z ~ (\forall y\in L) \}.
\end{equation}
%
%
%
%
Let $K$ be a sublattice of a lattice $L$,
 that is, $K$ is a $\Z$-submodule of $L$
 equipped with the restriction of the bilinear form of $L$ to $K$.
If $L/K$ is torsion-free as a $\Z$-module,
 $K$ is said to be primitive.
%
%
%
For a non-degenerate lattice $L$ of signature $(1,k)$ with $k\geq 0$,
 we have the decomposition
\begin{equation} \label{EQ_positive_cone}
 \{x\in L\otimes \R \bigm| x^2 >0 \}=C_L\sqcup (-C_L)
\end{equation}
 into two disjoint cones.
Here $C_L$ and $-C_L$ are connected components.
We define
\begin{equation} \label{O^+}
 O^+(L):=\{g\in O(L) \bigm| g(C_L)=C_L \}, \quad
 SO^+(L):=O^+(L)\cap SO(L),
\end{equation}
 where $SO(L)$ is the subgroup of $O(L)$
 consisting of isometries of determinant $1$.
The group $O^+(L)$ is a subgroup of $O(L)$ of index $2$.

\begin{lemma} \label{LEM_action_disc_grp}
Let $L$ be a non-degenerate even lattice of rank $n$.
For $g\in O(L)$ and $\varepsilon \in \{ \pm 1 \}$,
 $g$ acts on $A(L)$ as $\varepsilon \cdot \id$
 if and only if $(g- \varepsilon \cdot I_n) \cdot Q_{L}^{-1}$
 is an integer matrix.
\end{lemma}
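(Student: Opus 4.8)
The plan is to translate the whole statement into the chosen basis, so that ``acts as $\varepsilon\cdot\id$ on $A(L)$'' becomes a divisibility condition on matrices. Writing elements of $L=\Z^n$ as column vectors, the form is $\langle x,y\rangle=x^T Q_L y$ and $g\in O(L)$ acts by $x\mapsto gx$. First I would identify the dual lattice: an element $x\in L\otimes\Q$ lies in $L^*$ iff $x^T Q_L y\in\Z$ for every $y\in\Z^n$, i.e.\ iff $Q_L x\in\Z^n$ (using that $Q_L$ is symmetric). Hence $L^*=Q_L^{-1}\Z^n$ and $A(L)=Q_L^{-1}\Z^n/\Z^n$; in particular the columns of $Q_L^{-1}$ form a $\Z$-basis of $L^*$.

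Next I would record that $g$ genuinely induces an automorphism of $A(L)$. From $g^T Q_L g=Q_L$ one gets $Q_L g=(g^{-1})^T Q_L$, so $Q_L(gx)=(g^{-1})^T(Q_L x)$; since $g\in GL_n(\Z)$ we also have $(g^{-1})^T\in GL_n(\Z)$, and therefore $Q_L x\in\Z^n\Rightarrow Q_L(gx)\in\Z^n$, i.e.\ $g(L^*)\subseteq L^*$. Consequently $g-\varepsilon I_n$ maps $L^*$ into $L\otimes\Q$, and the induced map $\bar g$ on $A(L)$ equals $\varepsilon\cdot\id$ exactly when $gx-\varepsilon x\in L$ for all $x\in L^*$, that is, when $(g-\varepsilon I_n)(L^*)\subseteq L$.

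The core step is then purely formal: $(g-\varepsilon I_n)(L^*)\subseteq L$ means $(g-\varepsilon I_n)Q_L^{-1}\Z^n\subseteq\Z^n$. One direction is immediate; for the converse, apply $g-\varepsilon I_n$ to each column of $Q_L^{-1}$ (which lies in $L^*$) to see that every column of $(g-\varepsilon I_n)Q_L^{-1}$ lies in $\Z^n$. Hence the condition is equivalent to $(g-\varepsilon I_n)Q_L^{-1}\in\Mat_n(\Z)$, which is the assertion. (The evenness hypothesis is not needed for this argument.)

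I do not expect a genuine obstacle here; the only place to be careful is the bookkeeping of transposes, both in the identification $L^*=Q_L^{-1}\Z^n$ and in the verification $g(L^*)\subseteq L^*$, since a misplaced transpose is essentially the only way the argument could break.
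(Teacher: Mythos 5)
Your proposal is correct and follows exactly the paper's (much terser) argument: identify $L^*$ with $Q_L^{-1}\Z^n$ in the chosen basis and read off the condition $(g-\varepsilon I_n)(L^*)\subseteq L$ column by column. The transpose bookkeeping and the verification $g(L^*)\subseteq L^*$ are both handled correctly, and your observation that evenness is not needed is accurate.
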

\begin{proof}
As in (\ref{EQ_ortho}),
 we consider $L$ as $\Z^n$ with Gram matrix $Q_L$.
Then $L^*$ is generated by
 the columns of $Q_L^{-1}$.
This implies the assertion.
\end{proof}


%

In the rest of this section, we recall a few results related to lattices of signature $(1,1)$, or indefinite binary quadratic forms.

\begin{proposition} (\cite{D,J}) \label{PROP_generator_of_isometry}
Let $L$ be an even lattice of signature $(1,1)$:
\begin{equation}
 L:= \begin{pmatrix} 2a& b\\ b&2c \end{pmatrix}, \quad
 D:=-\disc(L)=b^2-4 ac>0.
\end{equation}
Then $SO^+(L)$ consists of the elements of the form
\begin{equation} \label{EQ_u}
 g=
 \begin{pmatrix}
  (u-bv)/2 & -cv \\
  av & (u+bv)/2
 \end{pmatrix}.
\end{equation}
Here $(u,v)$ is any solution of the positive Pell equation
\begin{equation} \label{EQ_Pell}
 u^2-D v^2=4
\end{equation}
 with $u,kv\in\Z$ and $u>0$, where $k:=\operatorname{gcd}(a,b,c)$.
If $D$ is a square number (resp.\ not a square number),
 then $SO^+(L)$ is isomorphic to a trivial group (resp.\ $\Z$).
\end{proposition}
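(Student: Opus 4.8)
The plan is to determine all of $SO(L)$ first by elementary $\Z$-linear algebra, then to carve out $SO^+(L)$ by a single sign condition, and finally to read off the group structure from the Pell equation \eqref{EQ_Pell}. A convenient device is the matrix $M:=\left(\begin{smallmatrix}-b&-2c\\ 2a&b\end{smallmatrix}\right)$: one checks directly that $\tr M=0$, $M^{2}=D\cdot I$, and $Q_LM=\left(\begin{smallmatrix}0&D\\-D&0\end{smallmatrix}\right)$ is skew-symmetric, hence $M^{T}Q_L=-Q_LM$. With this notation the matrix \eqref{EQ_u} is exactly $g_{u,v}=\tfrac12(uI+vM)$.

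\textbf{Step 1 (all of $SO(L)$).} Given $g\in SO(L)$, Cayley--Hamilton gives $g+g^{-1}=(\tr g)I$, while $g^{T}Q_Lg=Q_L$ gives $g^{-1}=Q_L^{-1}g^{T}Q_L$, so $Q_L(g-g^{-1})=Q_Lg-g^{T}Q_L$ is skew-symmetric. Since the skew $2\times2$ matrices are precisely the scalar multiples of $Q_LM$, this forces $g-g^{-1}=vM$ for some scalar $v$, whence $g=\tfrac12((\tr g)I+vM)=g_{\tr g,\,v}$. Conversely, for any scalars $u,v$, using $M^{T}Q_L=-Q_LM$ and $M^{2}=DI$ one computes $g_{u,v}^{T}Q_Lg_{u,v}=\tfrac14(uI+vM^{T})Q_L(uI+vM)=\tfrac{u^{2}-Dv^{2}}{4}Q_L$ and $\det g_{u,v}=\tfrac14(u^{2}-Dv^{2})$, so $g_{u,v}$ is an isometry of determinant $1$ exactly when $u^{2}-Dv^{2}=4$. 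It remains to identify the integrality conditions: $g_{u,v}\in GL_2(\Z)$ forces $u=\tr g_{u,v}\in\Z$ and $av,bv,cv\in\Z$, i.e.\ $kv\in\Z$; conversely, if $u,kv\in\Z$ the only point to check is that $u\pm bv$ are even, and this is automatic because $u^{2}=4+Dv^{2}\equiv(bv)^{2}\pmod{4}$ (note $acv^{2}\in\Z$ since $k^{2}\mid ac$). Hence $SO(L)=\{g_{u,v}:u^{2}-Dv^{2}=4,\ u,kv\in\Z\}$.

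\textbf{Step 2 (cutting down to $SO^+(L)$).} For $x\in L\otimes\R$, symmetrizing the scalar $x^{T}Q_Lg_{u,v}x$ and using $g_{u,v}^{T}Q_L=Q_Lg_{u,v}^{-1}$ together with $g_{u,v}+g_{u,v}^{-1}=uI$ gives $\langle x,g_{u,v}x\rangle=\tfrac12x^{T}Q_L(g_{u,v}+g_{u,v}^{-1})x=\tfrac u2\langle x,x\rangle$. In signature $(1,1)$ any two vectors of $C_L$ pair positively (immediate after diagonalizing $Q_L\otimes\R$ to $\operatorname{diag}(1,-1)$), so $g_{u,v}(C_L)=C_L$ iff $u>0$; moreover $u=\tr g\neq0$ for $g\in SO(L)$, since $SO^{+}(1,1;\R)\cong\R$ is torsion-free while the other component of $SO(1,1;\R)$ contributes only $-I$, so there is no order-$4$ isometry. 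Therefore $SO^{+}(L)$ is exactly the set of $g_{u,v}$ with the extra condition $u>0$, which is the first assertion.

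\textbf{Step 3 (structure), and the hardest point.} If $D=d^{2}$, then $u^{2}-Dv^{2}=4$ reads $(u-dv)(u+dv)=4$ with $(u-dv)+(u+dv)=2u>0$, and among the factorizations of $4$ with positive sum only $(u,v)=(2,0)$ is integral, so $SO^{+}(L)=\{I\}$. If $D$ is not a square, then $g_{u,v}\mapsto(u+v\sqrt D)/2\in\R_{>0}$ is an injective homomorphism (injective because $\sqrt D\notin\Q$; a homomorphism because $g_{u,v}g_{u',v'}=g_{u'',v''}$ with $\tfrac{u''+v''\sqrt D}{2}=\tfrac{u+v\sqrt D}{2}\cdot\tfrac{u'+v'\sqrt D}{2}$, which is just the multiplication rule for $\tfrac12(uI+vM)$ using $M^{2}=DI$), its image is discrete (for fixed $u,kv\in\Z$ only finitely many $(u,v)$ keep $(u+v\sqrt D)/2$ in a bounded range), and nontrivial (doubling a solution of $x^{2}-Dy^{2}=1$ produces a solution of $u^{2}-Dv^{2}=4$ with $v\neq0$); a nontrivial discrete subgroup of $\R_{>0}$ is infinite cyclic, so $SO^{+}(L)\cong\Z$. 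The only genuinely non-elementary ingredient is the classical solvability of the Pell equation for non-square $D$; the step I expect to require the most care is Step 1 — fixing the normalization of $v$ so that the matrix appears exactly as \eqref{EQ_u}, and handling the integrality and parity of its entries — together with the bookkeeping in Step 2 of which component of $SO(1,1;\R)$ each $g_{u,v}$ lies in.
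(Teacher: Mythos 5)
Your proof is correct, and it is worth noting that the paper itself gives no proof of this proposition at all: it is stated as a classical fact with citations to Dickson and Jones on automorphs of binary quadratic forms. So your argument is a genuine addition rather than a variant of the paper's. The organizing device $M=\left(\begin{smallmatrix}-b&-2c\\ 2a&b\end{smallmatrix}\right)$ with $M^2=DI$ and $M^TQ_L=-Q_LM$ is efficient: writing $g=\tfrac12(uI+vM)$ turns the isometry condition into $u^2-Dv^2=4$ by a one-line computation, identifies $SO(L)\otimes\Q$ with the norm-one elements of $\Q[\sqrt D]$, and makes the homomorphism $g_{u,v}\mapsto (u+v\sqrt D)/2$ transparent. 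All the delicate points check out: the parity of $u\pm bv$ via $u^2\equiv (bv)^2\pmod 4$ (using $k^2\mid ac$), the Bezout argument giving $kv\in\Z$ from $av,bv,cv\in\Z$, and the positivity criterion $\langle x,gx\rangle=\tfrac u2\,x^2$ for membership in $O^+$. Two spots are slightly terse but easily repaired: in the square case you should note explicitly that $dv\in\Z$ (it is rational with $(dv)^2=u^2-4\in\Z$) before factoring $4=(u-dv)(u+dv)$ over the integers; and the exclusion of $u=0$ follows even more directly from $-Dv^2=4$ being impossible than from the torsion argument in $SO(1,1;\R)$. The only external input, the solvability of $x^2-Dy^2=1$ for non-square $D$, is correctly isolated and is of course classical.
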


\begin{remark}
In Proposition \ref{PROP_generator_of_isometry},
 $D$ is a square number if and only if
 there exists $v \in L$ with $v^2=0$.
\end{remark}

\begin{remark}
Pell equation is usually considered as a Diophantine equation,
 that is, only integer solutions are admitted.
However, in Proposition \ref{PROP_generator_of_isometry},
 we need non-integer solutions to find all elements in $SO^+(L)$.
Of course we have usual Pell equation by replacing
 $a,b,c$ and $D$ by $a/k,b/k,c/k$ and $D/k^2$.
\end{remark}

\begin{remark} \label{RMK_minimal_solution}
In Proposition \ref{PROP_generator_of_isometry},
 the eigenvalues of $g$ 
 are $(u\pm v \sqrt{D})/2$ and
 the solution $(u',v')$ of  (\ref{EQ_Pell}) corresponding to
 $g^k$ ($k\in\Z$) satisfies
\begin{equation}
 \frac{u'+v' \sqrt{D}}{2}=\left( \frac{u+v \sqrt{D}}{2} \right)^k.
\end{equation}
Let $(u_0,v_0)$ be the smallest positive solution of (\ref{EQ_Pell}),
 that is, $(u_0,v_0)$ is the solution with $u_0,v_0>0$
 which minimizes $u_0$ (or $v_0$).
Then $SO^+(L)$ is generated by $g_0$ corresponding to $(u_0,v_0)$.

\end{remark}



We define
\begin{align}
 \mathbb{L}_{(1,1)} &:= \label{EQ_L11}
 \text{the set of even lattices of rank $2$ and signature $(1,1)$}, \\
 \Lprime
 &:= \{ L\in \mathbb{L}_{(1,1)} \bigm|
 v^2 \not \in \{ 0,-2 \} ~ (\forall v\in L) \}. \label{EQ_LL11}
\end{align}

\begin{lemma} \label{LEM_class_number}
Let $K=\Q(\sqrt{D})$
 be a real quadratic field of discriminant $D$ ($>0$).
Then there exists $L\in \Lprime$ with $\disc(L)=-D$
 if and only if $h^+(D)>1$.
Here $h^+(D)$ denotes the narrow class number of $K$.
\end{lemma}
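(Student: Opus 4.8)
The plan is to translate the lattice-theoretic condition on $\Lprime$ into the classical language of binary quadratic forms and their relation to ideal classes in the order $\0_K$. Recall that even lattices of rank $2$ and signature $(1,1)$ with $\disc = -D$ correspond (after dividing by the content $k = \gcd(a,b,c)$ when necessary) to indefinite binary quadratic forms $ax^2 + bxy + cy^2$ of discriminant $D = b^2 - 4ac$; when $D$ is the discriminant of $K = \Q(\sqrt D)$, the primitive such forms are in bijection with the narrow ideal class group of $\0_K$, whose order is $h^+(D)$. First I would make explicit which lattices in $\mathbb{L}_{(1,1)}$ with $\disc(L) = -D$ are excluded from $\Lprime$: by the Remark following Proposition \ref{PROP_generator_of_isometry}, $v^2 = 0$ has a solution iff $D$ is a square, which is not the case here since $D$ is a fundamental discriminant $> 0$, so that condition is automatic; the real constraint is that no $v \in L$ has $v^2 = -2$. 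A lattice represents $-2$ iff the associated quadratic form (possibly after scaling) represents $-2$ over $\Z$, and I would show that the forms of discriminant $D$ representing $-2$ are exactly those in the narrow principal genus / principal class — more precisely, I expect: $L$ represents $-2$ iff $L$ is (up to isometry) the lattice $U(?)$ or, in form language, iff the corresponding form is equivalent to the principal form of discriminant $D$.

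The key steps, in order, would be: (1) reduce to primitive forms, handling the content $k$: if $k \geq 2$ then every value $v^2$ is divisible by $2k^2 \geq 8$, so such $L$ automatically lies in $\Lprime$ and moreover such $L$ exists with $\disc = -D$ only in degenerate situations — I would check that when $D$ is fundamental, $k=1$ is forced for the relevant classification, or else note these extra lattices only help produce elements of $\Lprime$, so they do not affect the "only if" direction. (2) Establish the dictionary: isometry classes of primitive even lattices of signature $(1,1)$ and discriminant $-D$ $\leftrightarrow$ $SL_2(\Z)$-classes of primitive indefinite forms of discriminant $D$ $\leftrightarrow$ narrow ideal classes of $\0_K$, the last bijection being standard (cf.\ \cite{D,J} and \cite[Chap.\ 15]{CS}). (3) Identify the "bad" lattices: show the principal class corresponds to a lattice that represents $-2$ (exhibit an explicit vector), and conversely that if a primitive form of fundamental discriminant $D$ represents $-2$ then it is in the principal narrow class. (4) Conclude: there exists $L \in \Lprime$ with $\disc(L) = -D$ iff there exists a primitive lattice of discriminant $-D$ representing neither $0$ nor $-2$ iff the set of narrow ideal classes has a non-principal element iff $h^+(D) > 1$.

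The main obstacle I expect is step (3), the precise determination of which forms of discriminant $D$ represent $-2$. The cleanest route: a primitive form $f$ represents $m$ (with $\gcd$ conditions) iff $f$ is equivalent to a form with leading coefficient $m$, i.e.\ $\langle m, b', c' \rangle$ with $b'^2 - 4mc' = D$; for $m = -2$ this requires $b'^2 + 8c' = D$, solvable in integers exactly when $D \equiv b'^2 \pmod 8$, i.e.\ $D \equiv 0, 1, 4 \pmod 8$ — but $D$ fundamental already forces $D \equiv 0, 1 \pmod 4$, so one must check the parity/mod-$8$ bookkeeping carefully, and then verify that all resulting forms $\langle -2, b', c'\rangle$ lie in a single $SL_2(\Z)$-class, namely the principal one (this is where one uses that $-2 \cdot$ something is a norm form, tying back to the narrow principal ideal). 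I would also need to double-check the edge behavior when $D$ is small, and confirm the exclusion "$v^2 = -2$" really does correspond to the principal class rather than to an ambiguous class; a short computation with the principal form $x^2 - \tfrac{D}{4} y^2$ or $x^2 + xy - \tfrac{D-1}{4} y^2$ representing $-2$ for suitable $D$, combined with genus theory, should settle it. Everything else is a routine assembly of the classical correspondence.
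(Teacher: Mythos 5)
Your overall strategy coincides with the paper's: pass to the narrow ideal class group via the classical dictionary, observe that $v^2=0$ cannot occur because a fundamental discriminant $D>0$ is never a square, and show that the lattices of discriminant $-D$ representing $-2$ form exactly one isomorphism class, so that a lattice in $\Lprime$ exists iff there are at least two classes. However, your step (3) contains a concrete error that would derail the argument. A vector $v=(x,y)$ in the even lattice with Gram matrix $\bigl(\begin{smallmatrix}2a&b\\ b&2c\end{smallmatrix}\bigr)$ has $v^2=2(ax^2+bxy+cy^2)$, so ``$L$ represents $-2$'' means that the associated form of discriminant $D$ represents $-1$, not $-2$. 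The correct solvability condition is therefore $b'^2+4c'=D$, which holds for every fundamental $D$; your condition $b'^2+8c'=D$, i.e.\ $D\equiv 0,1,4\pmod 8$, would assert that no lattice of discriminant $-D$ represents $-2$ when $D\equiv 5\pmod 8$ --- but for $D=5$ the lattice $\bigl(\begin{smallmatrix}2&1\\ 1&-2\end{smallmatrix}\bigr)$ visibly represents $-2$, and since $h^+(5)=1$ the lemma would fail there. A second, less damaging, inaccuracy: the single ``bad'' class is in general not the principal class but the class of the \emph{negative} of the principal form; these coincide only when $x^2-Dy^2=-4$ is solvable. For $D=12$ the bad lattice is $\operatorname{diag}(-2,6)$ while the principal lattice $\operatorname{diag}(2,-6)$ lies in $\Lprime$ (indeed $h^+(12)=2$). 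This does not affect the final count --- all that matters is that the bad lattices form one class --- but an attempt to prove they are principal would fail.

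The paper avoids both pitfalls by staying entirely in lattice language: a vector $v$ with $v^2=-2$ is automatically primitive, hence extends to a basis, and after adjusting the second basis vector by multiples of $v$ (which changes the off-diagonal entry by multiples of $2$) one arrives at the unique Gram matrix $L_0=\bigl(\begin{smallmatrix}-2&\delta\\ \delta&(D-\delta^2)/2\end{smallmatrix}\bigr)$ with $\delta\in\{0,1\}$, $\delta\equiv D\pmod 2$. This shows in one stroke that a lattice representing $-2$ always exists and that all such lattices are isomorphic, with no genus theory or mod-$8$ case analysis. (Your worry about the content $\gcd(a,b,c)$ is also moot: for fundamental $D$ every form of discriminant $D$ is primitive.) If you replace your step (3) by this normalization, the rest of your outline goes through.
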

\begin{proof}
It is classically known that there is a natural bijection between
 the set of isomorphism classes of
 $L \in \mathbb{L}_{(1,1)}$ with $\disc(L)=-D$
 and the narrow ideal class group of $K$
 (see e.g.\ \cite[Section VII.2, Theorem 58]{FT-number-theory}).

Let $L\in \mathbb{L}_{(1,1)}$.
There exists $v\in L$ with $v^2=0$ if and only if
 $-\disc(L)$ is a square number.
Hence, for any $L$ such that
 $\disc(L)=-D$ and $L \not\in \Lprime$,
 there exists $v\in L$ with $v^2=-2$ and
\begin{equation} \label{EQ_lat_-2}
 L\cong L_0:=
 \begin{pmatrix} -2 & \delta \\ \delta & (D-\delta^2)/2 \end{pmatrix},
\end{equation}
 where $\delta\in\{ 0,1 \}$ and $D\equiv \delta \pmod 2$.
Therefore the existence of
 $L\in \Lprime$ with $\disc(L)=-D$
 is equivalent to the existence of
 $L\in \mathbb{L}_{(1,1)}$ with $\disc(L)=-D$
 and $L\not\cong L_0$,
 which is also equivalent to the condition $h^+(D)>1$.
\end{proof}


\section{K3 surfaces}
\label{SECT_k3}


A compact complex surface $X$
 is called a K3 surface if it is simply connected and
 has a nowhere vanishing holomorphic $2$-from $\omega_X$
 (see \cite{BHPV} for details).
We consider the second integral cohomology $H^2(X,\Z)$
 with the cup product as a lattice.
It is known that $H^2(X,\Z)$ is an even unimodular lattice
 of signature $(3,19)$,
 which is unique up to isomorphism and is called the K3 lattice.
We fix such a lattice and denote it by $\Lambda_{\text{K3}}$.
The Picard lattice $S_X$
 and transcendental lattice $T_X$ of $X$ are defined by
\begin{align}
 S_X &:= \{ x \in H^2(X,\Z) \bigm| \langle x,\omega_X \rangle=0 \}, \\
 T_X &:= \{ x \in H^2(X,\Z) \bigm|
 \langle x,y \rangle=0~(\forall y \in S_X) \}.
\end{align}
Here $\omega_X$ is considered as an element in $H^2(X,\C)$
 and the bilinear form on $H^2(X,\Z)$ is extended to that on
 $H^2(X,\C)$ linearly.
The Picard group 
 of $X$
 is naturally isomorphic to $S_X$.
It is known that $X$ is projective if and only if
 $S_X$ has signature $(1,\rho-1)$,
 where $\rho=\rank S_X$ is the Picard number of $X$.


Let $X$ be a projective K3 surface.
Since $H^2(X,\Z)$ is unimodular and $S_X$ is non-degenerate,
 we have the following natural identification:
\begin{equation}
 A(S_X)=A(T_X)=H^2(X,\Z)/(S_X\oplus T_X)
\end{equation}
 (see \cite{nikulin79int} for details).
By the global Torelli theorem for K3 surfaces \cite{PS,burnsrapoport75},
 the following map is injective:
\begin{equation} \label{EQ_map_torelli}
 \Aut(X) \ni \varphi \mapsto
 (g,h):=(\varphi^*|_{S_X},\varphi^*|_{T_X})
 \in O(S_X) \times O(T_X).
\end{equation}
Moreover,
 $(g,h) \in O(S_X) \times O(T_X)$
 is the image of some $\varphi \in \Aut(X)$
 by the map (\ref{EQ_map_torelli}) if and only if
 (1) the linear extension of $g$ (resp.\ $h$)
 preserves the ample cone $C_X$ of $X$ (resp.\ $\C\omega_X$)
 and (2) the actions of $g$ and $h$ on $A(S_X)=A(T_X)$
 coincide.

In this paper,
 we study projective K3 surfaces $X$ with Picard number $2$,
 for which we have $S_X \in \mathbb{L}_{(1,1)}$
 (see (\ref{EQ_L11})).
Conversely, the following holds:

\begin{proposition} \label{PROP_pic_2_sx}
For any $L \in \mathbb{L}_{(1,1)}$,
 there exists a (projective) K3 surface $X$
 such that $S_X \cong L$.
\end{proposition}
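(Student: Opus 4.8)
The plan is to construct $X$ via the surjectivity of the period map for K3 surfaces, after realizing $L$ as a primitive sublattice of the K3 lattice $\Lambda_{\text{K3}}$ with orthogonal complement of the correct signature. First I would embed $L$ primitively into $\Lambda_{\text{K3}}$: since $L$ is even of signature $(1,1)$, its discriminant group $A(L)$ is generated by at most $\rank L=2$ elements, and $\rank\Lambda_{\text{K3}}-\rank L=20$ exceeds this number plus one, while the signature $(3,19)$ of $\Lambda_{\text{K3}}$ dominates $(1,1)$ in each index; hence Nikulin's existence criterion for primitive embeddings into even unimodular lattices (\cite[Theorem 1.12.2, Corollary 1.12.3]{nikulin79int}) provides a primitive embedding $L\hookrightarrow\Lambda_{\text{K3}}$. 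Fix one, and let $N:=L^{\perp}\subset\Lambda_{\text{K3}}$; then $N$ is an even lattice of signature $(2,18)$.

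Next I would pick a ``very general'' period in $N$. Set $\Omega_N:=\{[\omega]\in\BP(N\otimes\C)\mid\langle\omega,\omega\rangle=0,\ \langle\omega,\bar\omega\rangle>0\}$; this is a non-empty complex manifold because $N$ has two positive eigenvalues. For each non-zero $n\in N$ the hyperplane section $\{[\omega]\in\Omega_N\mid\langle n,\omega\rangle=0\}$ is a proper closed analytic subset of $\Omega_N$, and $N$ is countable, so one can choose $[\omega]\in\Omega_N$ outside all of them; then $\langle n,\omega\rangle=0$ with $n\in N$ forces $n=0$. By the surjectivity of the period map (\cite[Chapter VIII]{BHPV}) there is a K3 surface $X$ and a marking $\phi\colon H^2(X,\Z)\xrightarrow{\sim}\Lambda_{\text{K3}}$ (an isometry) whose $\C$-linear extension sends $\C\omega_X$ to $\C\omega$.

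It remains to check that $S_X\cong L$ and that $X$ is projective. Put $M:=\phi(S_X)=\{x\in\Lambda_{\text{K3}}\mid\langle x,\omega\rangle=0\}$. Since $\omega\in N\otimes\C=L^{\perp}\otimes\C$ we have $L\subseteq M$. By the choice of $\omega$ we have $M\cap N=0$, hence $\rank M=\rank L$; as $L$ is primitive in $\Lambda_{\text{K3}}$, the group $\Lambda_{\text{K3}}/L$ is torsion-free, so its subgroup $M/L$ is trivial and $M=L$. Thus $S_X\cong L\in\mathbb{L}_{(1,1)}$, which has signature $(1,1)=(1,\rho-1)$ for $\rho=\rank S_X=2$, so $X$ is projective by the criterion recalled in this section. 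The one point requiring care is the genericity argument of the second paragraph, i.e.\ that the generic period in $\Omega_N$ yields Picard lattice exactly $L$ rather than a strict overlattice; everything else is a direct application of Nikulin's embedding theorem and the surjectivity of the period map.
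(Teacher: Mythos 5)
Your proof is correct and follows essentially the same route as the paper's: a primitive embedding $L\hookrightarrow\Lambda_{\text{K3}}$ via Nikulin's criterion followed by the surjectivity of the period map. The only difference is that you spell out the genericity argument ensuring $S_X$ is exactly $L$ and not a strict overlattice, which the paper leaves implicit.
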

\begin{proof}
Let $L \in \mathbb{L}_{(1,1)}$.
The K3 lattice $\Lambda_{\text{K3}}$
 contains a primitive sublattice $S$ which is isomorphic to $L$
 \cite[Theorem 1.14.4]{nikulin79fin}.
(In fact, such an $S$ is unique up to $O(\Lambda_{\text{K3}})$.)
The surjectivity of the period map for K3 surfaces \cite{Todorov}
 implies that there exists a K3 surface $X$
 such that $S_X \cong L$.
\end{proof}

Let $X$ be a projective K3 surface with Picard number $2$.
%
%
%
%
We are interested in automorphisms $\varphi \in \Aut(X)$
 of infinite order such that
 $\varphi^*\omega_X=\varepsilon \omega_X$
 with $\varepsilon \in \{ \pm 1 \}$.
By applying the global Torelli theorem, we obtain the following:


\begin{proposition} \label{PROP_inf_S_X}
Let $X$ be a projective K3 surface with Picard number $2$
 and $(g,h) \in O(S_X) \times O(T_X)$.
Then $(g,h)=(\varphi^*|_{S_X},\varphi^*|_{T_X})$ for some
 $\varphi \in \Aut(X)$ such that $\ord(\varphi)=\infty$
 and
 $\varphi^* \omega_X=\varepsilon \omega_X$
 with $\varepsilon \in \{ \pm 1 \}$
 if and only if the following conditions are satisfied:
\begin{enumerate}
\item
$S_X \in \Lprime$;
\item
$1\neq g \in SO^+(S_X)$;
\item
$g$ acts on $A(S_X)$ as $\varepsilon \cdot \id$;
\item
$h=\varepsilon \cdot \id$.
\end{enumerate}
\end{proposition}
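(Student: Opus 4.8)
The plan is to prove Proposition \ref{PROP_inf_S_X} by carefully unwinding the characterization of automorphisms coming from the global Torelli theorem recalled in the previous paragraph, namely that a pair $(g,h) \in O(S_X) \times O(T_X)$ lifts to an automorphism precisely when (a) $g$ preserves the ample cone $C_X$, (b) $h$ preserves $\C\omega_X$, and (c) $g$ and $h$ induce the same action on $A(S_X)=A(T_X)$. So I would organize the argument as a chain of equivalences, translating each of these three conditions together with the extra hypotheses ``$\ord(\varphi)=\infty$'' and ``$\varphi^*\omega_X = \varepsilon\omega_X$'' into the four listed conditions.

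First I would treat the transcendental side. Since $X$ has Picard number $2$, the lattice $T_X$ has signature $(2,18)$ and $\omega_X$ spans a line on which $h$ acts; the standard fact (from the theory of K3 automorphisms) is that an isometry $h \in O(T_X)$ fixing $\C\omega_X$ and of finite order on the orthogonal complement behaves like multiplication by a root of unity, but here, for the Hodge structure to be preserved while $S_X$ and $T_X$ are only rank $2$ and rank $20$, the key point is that $T_X \otimes \Q$ is an irreducible Hodge structure (this uses that $\rho = 2$ so $T_X$ has no further rational sub-Hodge structure) — hence any $h$ commuting with the Hodge structure acts on $T_X\otimes\Q$ as a scalar, and being an integral isometry it must be $\pm\id$. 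Thus $\varphi^*\omega_X = \varepsilon\omega_X$ with $\varepsilon = \pm1$ forces $h = \varepsilon\cdot\id$, which is condition (4); conversely $h = \varepsilon\cdot\id$ certainly preserves $\C\omega_X$ and gives $\varphi^*\omega_X = \varepsilon\omega_X$. I would also note here that $\varepsilon\cdot\id$ acts on $A(T_X)$ as $\varepsilon\cdot\id$, so the cocycle condition (c) becomes exactly ``$g$ acts on $A(S_X)$ as $\varepsilon\cdot\id$'', which is condition (3).

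Next I would treat the Picard side. The ample cone $C_X$ is one of the two connected components of $\{x \in S_X\otimes\R : x^2 > 0\}$ cut out further by the walls coming from $(-2)$-classes; an isometry $g$ preserves $C_X$ only if it preserves the component, i.e. $g \in O^+(S_X)$. Combined with $\det g = 1$ (which is forced: on the transcendental side $h = \varepsilon\cdot\id$ on a rank-$20$ lattice has $\det = \varepsilon^{20} = 1$, and since $g$ and $h$ agree on the discriminant group one gets $\det g = 1$ as well — alternatively this is Remark \ref{remark}(ii)), we get $g \in SO^+(S_X)$. The condition $\ord(\varphi) = \infty$ translates, via injectivity of the Torelli map and $h = \pm\id$ having order $\le 2$, into $g$ having infinite order, equivalently $g \ne 1$ — because by Proposition \ref{PROP_generator_of_isometry}, $SO^+(S_X)$ is either trivial or infinite cyclic, so its only finite-order element is the identity. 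This gives condition (2). Finally, the requirement $S_X \in \Lprime$, i.e. condition (1): if $S_X \notin \mathbb{L}^{\star}_{(1,1)}$ then either there is $v \in S_X$ with $v^2 = 0$, forcing $\disc(S_X)$ to be a square and hence (by Proposition \ref{PROP_generator_of_isometry}) $SO^+(S_X)$ trivial, contradicting $g \ne 1$; or there is $v$ with $v^2 = -2$, and then the reflection in $v$ acts as a $(-2)$-wall so that no infinite-order isometry can preserve the (proper, chamber-shaped) ample cone — more precisely the group of isometries preserving $C_X$ is then finite. Conversely, when $S_X \in \Lprime$ there are no $(-2)$-classes, so $C_X$ equals the full positive cone component and \emph{every} element of $SO^+(S_X)$ automatically preserves $C_X$; this is what makes conditions (1)--(4) sufficient.

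The main obstacle, and the step I would write most carefully, is the ``ample cone versus positive cone'' business on the Picard side: showing that $S_X \in \Lprime$ is exactly the condition under which an infinite-order $g \in SO^+(S_X)$ can preserve $C_X$, and that in that case $C_X$ is the whole positive-cone component so no extra constraint is imposed. This requires recalling that $C_X$ is the component of the positive cone containing ample classes, that its walls are the hyperplanes $v^\perp$ for effective $(-2)$-classes $v$, and that for $\rank S_X = 2$ a single $(-2)$-class already produces a reflection of infinite-... no, a reflection of order $2$ whose presence in the stabilizer bounds that stabilizer to be finite (it becomes infinite dihedral with the translations killed, or rather the hyperbolic isometries no longer stabilize the half-line chamber). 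I would phrase this cleanly using that, in the indefinite rank-$2$ case, $SO^+(S_X)$ acts on the positive cone component with the hyperbolic (infinite-order) elements having irrational fixed rays, so they cannot fix a rational wall; hence if a wall exists, the infinite-order elements move $C_X$ off itself. The transcendental-side irreducibility claim ($T_X\otimes\Q$ irreducible when $\rho = 2$) is standard but I would cite it or give the one-line argument that a proper rational sub-Hodge structure would have to contain or be contained in the $(1,0)$-part $\C\omega_X$, forcing its rank to be $0$ or $\ge 2$ with a complement, which by Lefschetz $(1,1)$ would enlarge $S_X$ — a contradiction.
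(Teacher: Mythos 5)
Your overall strategy is the same as the paper's: unwind the global Torelli lifting criterion (ample cone preserved, $\C\omega_X$ preserved, matching actions on $A(S_X)=A(T_X)$) into the four listed conditions. Two of your sub-arguments are done more "by hand" than in the paper, which simply cites references: for condition (4) the paper invokes Nikulin's theorem that $\varphi^*\omega_X=\varepsilon\omega_X$ forces $\varphi^*|_{T_X}=\varepsilon\cdot\id$ (your irreducibility-of-$T_X\otimes\Q$ sketch is the standard proof of that fact), and for condition (1) the paper cites \cite[Corollary 3.4]{GLP} that $\Aut(X)$ is infinite iff $S_X\in\Lprime$, whereas you re-derive it via the wall/chamber discussion. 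Those choices are legitimate, though your wall-crossing argument in the $v^2=-2$ case should be tightened (e.g.\ if $g$ of infinite order preserved the chamber it would permute its two rational boundary rays, so $g^2$ would fix a vector of positive square and hence satisfy $g^4=1$).

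There is, however, one genuinely broken step: your derivation of $\det g=1$. Knowing $\det h=1$ and that $g$ and $h$ induce the same map on the discriminant group gives no information about $\det g$ --- a reflection can act trivially on $A(L)$, and $\pm\id$ agree on $A(L)$ whenever $2A(L)=0$ while having opposite determinants in odd rank --- so that implication is a non sequitur. The fallback "alternatively this is Remark (ii)" is circular, since that remark is itself a consequence of this proposition (via condition (2)). The correct argument, and the one the paper uses, is the one you gesture at elsewhere but never deploy here: an element of $O^+(S_X)$ of determinant $-1$ on a rank-$2$ hyperbolic lattice acts on the positive cone as a reflection, hence satisfies $g^2=1$; combined with $h^2=1$ and the injectivity of the Torelli map this would give $\varphi^2=\id$, contradicting $\ord(\varphi)=\infty$. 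With that substitution (which also cleanly yields $g\neq 1$ and the infinite order of $g$ in one stroke), your proof closes up and matches the paper's.
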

\begin{proof}
Assume that $(g,h)=(\varphi^*|_{S_X},\varphi^*|_{T_X})$ for some $\varphi$
 as in the statement.
Since $\Aut(X)$ is infinite if and only if
 $S_X \in \Lprime$ \cite[Corollary 3.4]{GLP},
 the condition (1) holds.
Moreover, the ample cone $C_X$ of $X$ coincides with
 $C_{S_X}$ or $-C_{S_X}$ (see (\ref{EQ_positive_cone})).
Since $\varphi^* \omega_X=\varepsilon \omega_X$,
 $h$ acts
 on $T_X$ as $\varepsilon \cdot \id$
 \cite[Theorem 3.1]{nikulin79fin}.
Thus the condition (4) holds.
This implies the condition (3)
 because the actions of $g$ and $h$ on $A(S_X)=A(T_X)$ coincide.
If $\det(g)=-1$, then the action of $g$ on $C_X$ is a reflection, and hence
 $g^2=1$.
Since $h^2=1$, we have $\varphi^2=1$
 by the injectivity of the map (\ref{EQ_map_torelli}),
 which is a contradiction.
Therefore $g\in SO^+(S_X)$.
Similarly, we have $g \neq 1$.
Hence the condition (2) holds.

Conversely, if the conditions (1)--(4) are satisfied,
 then there exists $\varphi \in \Aut(X)$ such that
 $(g,h)=(\varphi^*|_{S_X},\varphi^*|_{T_X})$
 by the global Torelli theorem.
The condition (4) implies that
 $\varphi^* \omega_X=\varepsilon \omega_X$
 because $\omega_X \in T_X \otimes \C$.
Since $SO^+(S_X)$ is isomorphic to $\Z$
 (Proposition \ref{PROP_generator_of_isometry}),
 $\varphi$ is of infinite order.
\end{proof}

\begin{remark}
A non-projective K3 surface with Picard number $2$ may have
 an automorphism of infinite order (cf.\ \cite{mcmullen1}).
\end{remark}


\section{Proof of Main Theorem}
\label{SECT_proof}


\subsection{Preparation}

In order to prove Main Theorem,
 we show the following:

\begin{proposition} \label{PROP_key_prop}
Let $X$ be a projective K3 surface with Picard number $2$.
For $\varepsilon\in \{ \pm 1 \}$ and $g \in O(S_X)$,
 the following conditions are equivalent:
\begin{enumerate}
\item
$g=\varphi^*|_{S_X}$ for some $\varphi \in \Aut(X)$
 such that $\ord(\varphi)=\infty$ and
 $\varphi^* \omega_X=\varepsilon \omega_X$.
\item
$S_X \in \Lprime$ and
 $g$ is given by
\begin{equation}
 g=g_{u,v}:=\begin{pmatrix}
  (u-bv)/2 & -cv \\
  av & (u+bv)/2
 \end{pmatrix},
\end{equation}
 where
\begin{equation}
 S_X=\begin{pmatrix}2a&b\\b&2c\end{pmatrix}, \quad
 (u,v)=(\alpha^2-2\varepsilon,\alpha\beta),
\end{equation}
 and $\alpha,\beta$ are nonzero integers satisfying
\begin{equation}
 \alpha^2-D \beta^2=4\varepsilon, \quad
 D:=-\disc(S_X)=b^2-4ac>0.
\end{equation}
\end{enumerate}
\end{proposition}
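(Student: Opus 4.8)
The plan is to deduce Proposition \ref{PROP_key_prop} from Proposition \ref{PROP_inf_S_X} together with Proposition \ref{PROP_generator_of_isometry} by translating the condition ``$g$ acts on $A(S_X)$ as $\varepsilon\cdot\id$'' into the arithmetic of the Pell equation. Note first that for $S_X\in\Lprime$ one has $k:=\gcd(a,b,c)=1$ (otherwise $S_X$ would contain a vector of square $0$ or, after rescaling, of square $-2$; more carefully, $k>1$ forces $D$ to be divisible by $k^2$ and one checks the lattice is excluded), so by Proposition \ref{PROP_generator_of_isometry} the elements of $SO^+(S_X)$ are exactly the $g_{u,v}$ with $(u,v)$ an \emph{integer} solution of $u^2-Dv^2=4$, $u>0$. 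Thus condition (1) of the present proposition is, via Proposition \ref{PROP_inf_S_X}, equivalent to: $S_X\in\Lprime$, $g=g_{u,v}$ for such an integer solution with $(u,v)\neq(2,0)$, and $g_{u,v}$ acts on $A(S_X)$ as $\varepsilon\cdot\id$.

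The heart of the argument is the last condition. By Lemma \ref{LEM_action_disc_grp}, $g_{u,v}$ acts on $A(S_X)$ as $\varepsilon\cdot\id$ iff $(g_{u,v}-\varepsilon I_2)Q_{S_X}^{-1}$ is an integer matrix, where $Q_{S_X}=\begin{pmatrix}2a&b\\b&2c\end{pmatrix}$. I would compute this product explicitly: $Q_{S_X}^{-1}=\frac{1}{-D}\begin{pmatrix}2c&-b\\-b&2a\end{pmatrix}$, and
\[
 g_{u,v}-\varepsilon I_2=\begin{pmatrix}(u-2\varepsilon-bv)/2 & -cv\\ av & (u-2\varepsilon+bv)/2\end{pmatrix}.
\]
Multiplying out, one finds that $(g_{u,v}-\varepsilon I_2)Q_{S_X}^{-1}=\frac{u-2\varepsilon}{2D}\begin{pmatrix}\,\cdot\,&\cdot\\\cdot&\cdot\end{pmatrix}$ is a rank-$\le 1$ multiple of a fixed integer matrix times the scalar $(u-2\varepsilon)/D$ — more precisely the off-diagonal and diagonal entries all come out proportional to $(u-2\varepsilon)$, with the $-D$ in the denominator. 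The upshot is that the integrality condition is equivalent to $D\mid (u-2\varepsilon)$ (one must be slightly careful with the factor of $2$ and the parity of $b$, checking the entries with the $b$'s separately, but the scalar controlling everything is $(u-2\varepsilon)/D$). Writing $u-2\varepsilon=D\beta^2$ is then the natural move: since $u^2-Dv^2=4$ gives $(u-2\varepsilon)(u+2\varepsilon)=Dv^2$, if $D\mid u-2\varepsilon$ write $u-2\varepsilon=D m$; then $m(u+2\varepsilon)=v^2$, and a short argument using $\gcd$ considerations together with $u+2\varepsilon = Dm+4\varepsilon$ shows $m$ is itself a square, $m=\beta^2$, and correspondingly $u+2\varepsilon=\alpha^2$ with $v=\alpha\beta$. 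Substituting back, $\alpha^2-D\beta^2=(u+2\varepsilon)-(u-2\varepsilon)=4\varepsilon$, and $u=\alpha^2-2\varepsilon$, $v=\alpha\beta$ as claimed. Conversely, given such $\alpha,\beta$, the pair $(u,v)=(\alpha^2-2\varepsilon,\alpha\beta)$ solves the Pell equation and satisfies $D\mid u-2\varepsilon$, hence $g_{u,v}$ acts as $\varepsilon\cdot\id$ on $A(S_X)$. Finally $g_{u,v}\neq 1$ translates to $(u,v)\neq(2,0)$, i.e. $\alpha,\beta$ nonzero (if $\beta=0$ then $\alpha^2=4\varepsilon$ forces $\varepsilon=1$, $\alpha=\pm2$, $u=2$, $v=0$; if $\alpha=0$ then $-D\beta^2=4\varepsilon$ is impossible since $D>0$), which matches the ``nonzero integers'' hypothesis.

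The main obstacle I anticipate is the bookkeeping in the integrality computation: one must correctly handle the half-integer entries $(u\pm bv)/2$ of $g_{u,v}$, the two cases according to the parity of $D$ (equivalently whether $b$ is even or odd), and verify that no condition beyond $D\mid(u-2\varepsilon)$ is hidden in the individual matrix entries — in particular that the entries involving $v$ and $b$ do not impose extra constraints given $u^2-Dv^2=4$. A clean way to organize this is to note that $g_{u,v}-\varepsilon I_2 = \tfrac12\big((u-2\varepsilon)I_2 + v\,J\big)$ where $J=\begin{pmatrix}-b&-2c\\2a&b\end{pmatrix}$ satisfies $J Q_{S_X}^{-1} = Q_{S_X}^{-1} J^T$ being (up to sign) the companion-type integer matrix $\begin{pmatrix}0&1\\-1&0\end{pmatrix}$-conjugate; then $v J Q_{S_X}^{-1}$ is automatically integral and the whole integrality question collapses to the scalar term $\tfrac{u-2\varepsilon}{2}Q_{S_X}^{-1}$, i.e. to $D\mid (u-2\varepsilon)$ after clearing the $2$ using $u\equiv 2\varepsilon\pmod 2$ (forced by $u^2-Dv^2=4$ and $D=b^2-4ac$). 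With that structural observation the remaining steps are routine, and the statement follows by combining with Proposition \ref{PROP_inf_S_X}.
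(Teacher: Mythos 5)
Your overall strategy---reduce to Proposition \ref{PROP_inf_S_X} and then translate ``$g$ acts on $A(S_X)$ as $\varepsilon\cdot\id$'' via Lemma \ref{LEM_action_disc_grp} into divisibility conditions on $(u,v)$---is exactly the paper's (the translation is the paper's Lemma \ref{LEM_disc_triv}). However, two of your steps do not hold up as written. First, the claim that $S_X\in\Lprime$ forces $\gcd(a,b,c)=1$ is false: the lattice with Gram matrix $\left(\begin{smallmatrix}6&3\\3&-6\end{smallmatrix}\right)$ has $a=b=-c=3$, discriminant $-45$, and represents only multiples of $6$, so it lies in $\Lprime$; and its $SO^+$ contains $\left(\begin{smallmatrix}1&1\\1&2\end{smallmatrix}\right)=g_{u,v}$ with $(u,v)=(3,1/3)$. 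So you cannot assume at the outset that $v\in\Z$. The paper instead works with $u,kv\in\Z$ and deduces $v\in\Z$ only after establishing $D\mid(u-2\varepsilon)$ (from integrality of $\det\bigl((g-\varepsilon I_2)Q_{S_X}^{-1}\bigr)$), which gives $v^2=m(mD+4\varepsilon)\in\Z$ with $m=(u-2\varepsilon)/D$. This is repairable, but your stated justification is wrong and the repair is needed.

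Second, the crux of the whole lemma is precisely the assertion you dispatch as ``a short argument using gcd considerations \dots shows $m$ is itself a square.'' For odd primes $p$ a gcd argument does work ($p\mid m$ and $p\mid mD+4\varepsilon$ would force $p\mid 4$), but the prime $2$ is genuinely delicate: one needs $D=b^2-4ac\equiv 0$ or $1\pmod 4$ and a case split on $\ord_2(m)$ (the paper rules out $\ord_2(m)=1$ by a mod-$4$ contradiction on $(v/2)^2$ and treats $\ord_2(m)\geq 3$ separately). Without this the forward implication is unproved. Relatedly, your closing ``structural'' shortcut is not correct: the identity $g-\varepsilon I_2=\tfrac12\bigl((u-2\varepsilon)I_2+vJ\bigr)$ with $JQ_{S_X}^{-1}=\left(\begin{smallmatrix}0&-1\\1&0\end{smallmatrix}\right)$ is fine, but the overall factor $\tfrac12$ means $\tfrac v2\,JQ_{S_X}^{-1}$ is not integral when $v$ is odd, so the integrality question does \emph{not} collapse to $D\mid(u-2\varepsilon)$ alone; the parity interaction between $v$ and $b\beta^2$ is exactly the final check in the paper's converse direction (the displayed matrix (\ref{EQ_mat_g_epsilon})). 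Your entry-by-entry fallback, carried out carefully, is what the paper actually does.
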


\begin{remark}
Assume that $D$ is not a square number and
 $\operatorname{gcd}(a,b,c)=1$
 in Proposition \ref{PROP_key_prop}.
Then Proposition \ref{PROP_generator_of_isometry}
 implies that
\begin{equation}
 SO^+(S_X)= \{ g_{u,v} \bigm| u,v\in\Z, ~
 u>0, ~ u^2-D v^2=4 \} \cong \Z.
\end{equation}
Under the condition (2) in Proposition \ref{PROP_key_prop},
 we have
\begin{equation}
 \left( \frac{\alpha+\beta\sqrt{D}}{2} \right)^2=
 \frac{\alpha^2-2\varepsilon+\alpha\beta\sqrt{D}}{2}
 =\frac{u+v\sqrt{D}}{2}.
\end{equation}
Hence $g_{u,v}=g_{\alpha,\beta}^2$
 (see Remark \ref{RMK_minimal_solution}),
 and thus
\begin{align}
 \Gamma:=& \{ g\in SO^+(S_X) \bigm| g=\varphi^* |_{S_X},~
 \varphi^* \omega_X= \pm \omega_X ~
 (\exists \varphi \in \Aut(X)) \} \\
 =& \{ g_{\alpha,\beta}^2 \bigm|
 \alpha,\beta \in \Z,~
 \alpha^2-D \beta^2=\pm 4 \}.
\end{align}
Therefore $\Gamma$ is a subgroup of $SO^+(S_X)$
 of index $1$ or $2$
 according to whether the equation
 $\alpha^2-D\beta^2=-4$ has an integer solution
 $(\alpha,\beta)$ or not.
\end{remark}

We obtain Proposition \ref{PROP_trace_g} below from
 Proposition \ref{PROP_key_prop},
 because any lattice
 $L\in \mathbb{L}_{(1,1)}$
 is realized as the Picard lattice $S_X$
 of a K3 surface $X$
 by Proposition \ref{PROP_pic_2_sx}.


\begin{proposition} \label{PROP_trace_g}
For $\varepsilon\in \{ \pm 1 \}$ and $u\in\Z$,
 the following conditions are equivalent:
\begin{enumerate}
\item
$u=\tr(\varphi^*|_{S_X})$
 for some automorphism $\varphi$
 of a projective K3 surface $X$ with Picard number $2$ such that
 $\ord(\varphi)=\infty$ and
 $\varphi^* \omega_X=\varepsilon \omega_X$.
\item
$u=\alpha^2-2 \varepsilon$ for some $\alpha \in \Z_{>0}$ such that
 there exist $L\in \Lprime$ and $\beta \in \Z_{>0}$
 satisfying 
 $\disc(L)=-(\alpha^2-4 \varepsilon)/\beta^2$.
\end{enumerate}
\end{proposition}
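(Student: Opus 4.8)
The plan is to obtain Proposition \ref{PROP_trace_g} as a bookkeeping reformulation of Proposition \ref{PROP_key_prop}, using Proposition \ref{PROP_pic_2_sx} to translate between ``the Picard lattice of a projective K3 surface of Picard number $2$'' and ``an abstract lattice in $\Lprime$'', while keeping track of traces. The one elementary observation I would record at the outset is that the isometry $g_{u,v}$ in Proposition \ref{PROP_key_prop} has trace equal to its Pell parameter, $\tr(g_{u,v})=(u-bv)/2+(u+bv)/2=u$; hence, whenever condition (2) of Proposition \ref{PROP_key_prop} holds, $\tr(g)=u=\alpha^2-2\varepsilon$.

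For the implication (1) $\Rightarrow$ (2) I would take an automorphism $\varphi$ of a projective K3 surface $X$ of Picard number $2$ with $\ord(\varphi)=\infty$, $\varphi^*\omega_X=\varepsilon\omega_X$ and $u=\tr(\varphi^*|_{S_X})$, and apply Proposition \ref{PROP_key_prop} to $g:=\varphi^*|_{S_X}$. This yields $S_X\in\Lprime$ and nonzero integers $\alpha,\beta$ with $\alpha^2-D\beta^2=4\varepsilon$, where $D:=-\disc(S_X)>0$, and with $u=\tr(g)=\alpha^2-2\varepsilon$ by the trace formula. Since replacing $(\alpha,\beta)$ by $(|\alpha|,|\beta|)$ affects neither the equation nor the value $\alpha^2-2\varepsilon$, I may take $\alpha,\beta\in\Z_{>0}$. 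Solving the relation for $D$ gives $D=(\alpha^2-4\varepsilon)/\beta^2$, so $L:=S_X\in\Lprime$ has $\disc(L)=-(\alpha^2-4\varepsilon)/\beta^2$, which is exactly condition (2).

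For the converse (2) $\Rightarrow$ (1) I would start from $\alpha,\beta\in\Z_{>0}$ and $L\in\Lprime$ with $\disc(L)=-(\alpha^2-4\varepsilon)/\beta^2$. As $L$ has signature $(1,1)$ its discriminant is negative, so $\alpha^2-4\varepsilon>0$ and $D:=-\disc(L)=(\alpha^2-4\varepsilon)/\beta^2$ is a positive integer satisfying $\alpha^2-D\beta^2=4\varepsilon$. By Proposition \ref{PROP_pic_2_sx} there is a projective K3 surface $X$ with $S_X\cong L$, necessarily of Picard number $2$. Writing $S_X=\begin{pmatrix}2a&b\\b&2c\end{pmatrix}$ in a suitable basis and setting $(u,v)=(\alpha^2-2\varepsilon,\alpha\beta)$, one checks $u^2-Dv^2=4$ and $u>0$, so $g:=g_{u,v}\in SO^+(S_X)$ and condition (2) of Proposition \ref{PROP_key_prop} is satisfied for this $g$; that proposition then produces $\varphi\in\Aut(X)$ with $\ord(\varphi)=\infty$, $\varphi^*\omega_X=\varepsilon\omega_X$ and $\varphi^*|_{S_X}=g_{u,v}$, whence $\tr(\varphi^*|_{S_X})=u=\alpha^2-2\varepsilon$, i.e.\ condition (1).

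I do not expect a substantive obstacle: the real content has already been placed in Proposition \ref{PROP_key_prop} and Proposition \ref{PROP_pic_2_sx}. The only points demanding a little care are the bookkeeping ones — upgrading ``nonzero'' to ``positive'' in the choice of $\alpha,\beta$, and noting that the positivity hypothesis $D>0$ in Proposition \ref{PROP_key_prop} corresponds, on the abstract-lattice side, to the automatic negativity of $\disc(L)$ for a lattice of signature $(1,1)$, so that the implicit divisibility requirement $\beta^2\mid(\alpha^2-4\varepsilon)$ in condition (2) is precisely what is needed for the two formulations to match.
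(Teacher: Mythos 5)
Your proposal is correct and follows exactly the route the paper takes: it deduces Proposition \ref{PROP_trace_g} from Proposition \ref{PROP_key_prop} together with the realization of any $L\in\mathbb{L}_{(1,1)}$ as a Picard lattice via Proposition \ref{PROP_pic_2_sx}, with the observation $\tr(g_{u,v})=u$ doing the bookkeeping. The paper states this in one sentence; your write-up merely makes explicit the same translation (including the harmless replacement of nonzero $\alpha,\beta$ by $|\alpha|,|\beta|$ and the sign of $\disc(L)$), so there is nothing to add.
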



Proposition \ref{PROP_key_prop}
 is a direct conclusion of Proposition \ref{PROP_inf_S_X}
 and the following:

\begin{lemma} \label{LEM_disc_triv}
Let $L\in\mathbb{L}_{(1,1)}$:
\begin{equation}
 L=\begin{pmatrix}2a&b\\b&2c\end{pmatrix},
 \quad D:=-\disc(L)=b^2-4ac>0,
\end{equation}
and let $g$ be a nontrivial isometry contained in $SO^+(L)$
 (see Proposition \ref{PROP_generator_of_isometry}):
\begin{equation} \label{EQ_g_and_pell}
 g=\begin{pmatrix}
  (u-bv)/2 & -cv \\
  av & (u+bv)/2
 \end{pmatrix},
 \quad u^2-D v^2=4, \quad u>2.
\end{equation}
For $\varepsilon \in \{ \pm 1 \}$,
 the isometry $g$ acts on $A(L)$ as $\varepsilon \cdot \id$
 if and only if we have
\begin{equation} \label{EQ_D_u_v}
 (u,v)=(\alpha^2-2\varepsilon,\alpha\beta), \quad
 \alpha^2-D \beta^2=4\varepsilon
\end{equation}
 for some nonzero integers $\alpha,\beta$.
\end{lemma}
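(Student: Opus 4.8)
The plan is to reduce everything to Lemma~\ref{LEM_action_disc_grp}: since $Q_L^{-1}$ has the columns of $L^*$ as its columns, $g$ acts on $A(L)$ as $\varepsilon\cdot\id$ if and only if $(g-\varepsilon I_2)L^*\subseteq L$, i.e.\ $(g-\varepsilon I_2)Q_L^{-1}$ is an integer matrix. First I would carry out this one matrix computation. Writing $w:=u-2\varepsilon$ (so $w>0$ since $u>2$ and $\varepsilon=\pm1$), and using $b^2-4ac=D$, one gets
\[
 (g-\varepsilon I_2)\,Q_L^{-1}=\frac{1}{-D}\begin{pmatrix} cw & (vD-bw)/2 \\ -(bw+vD)/2 & aw\end{pmatrix},
\]
so that the condition ``$g$ acts as $\varepsilon\cdot\id$'' is exactly: $D\mid aw$, $D\mid cw$, $2D\mid vD-bw$ and $2D\mid vD+bw$. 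The whole lemma is then a statement about when this holds.

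For the direction (\ref{EQ_D_u_v})$\Rightarrow$(acts as $\varepsilon\cdot\id$), I would simply substitute $(u,v)=(\alpha^2-2\varepsilon,\alpha\beta)$. Then $w=\alpha^2-4\varepsilon=D\beta^2$ by $\alpha^2-D\beta^2=4\varepsilon$, so the two diagonal entries above become $-c\beta^2,-a\beta^2\in\Z$. For the off-diagonal entries one needs $\alpha\equiv b\beta\pmod 2$; this follows because $(\alpha-b\beta)(\alpha+b\beta)=\alpha^2-b^2\beta^2=4\varepsilon-4ac\beta^2\equiv0\pmod4$ while the two factors have equal parity, forcing both to be even (here the evenness of $L$, i.e.\ $D=b^2-4ac$, is what is being used). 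Plugging in, the off-diagonal entries become $\mp\beta(\alpha\mp b\beta)/2\in\Z$, and Lemma~\ref{LEM_action_disc_grp} finishes this direction.

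For the converse, assume $(g-\varepsilon I_2)L^*\subseteq L$. The first key step is to get the divisibility $D\mid w$, which I would obtain by an index count rather than by manipulating the four congruences: the eigenvalues of $g$ are $\lambda,\lambda^{-1}$ with $\lambda\lambda^{-1}=1$ and $\lambda+\lambda^{-1}=u$, so $\det(g-\varepsilon I_2)=(\lambda-\varepsilon)(\lambda^{-1}-\varepsilon)=2-\varepsilon u=-\varepsilon w\neq0$; hence $g-\varepsilon I_2$ is injective and $[L^*:(g-\varepsilon I_2)L^*]=|{-\varepsilon w}|=w$. Since $(g-\varepsilon I_2)L^*\subseteq L\subseteq L^*$, this index factors as $[L^*:L]\cdot[L:(g-\varepsilon I_2)L^*]=D\cdot[L:(g-\varepsilon I_2)L^*]$, so $D\mid w$; put $t:=w/D\in\Z_{>0}$. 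Next, from the Pell equation $(u-2)(u+2)=Dv^2$ together with $D\mid u-2\varepsilon$ one reads off (separately for $\varepsilon=1$ and $\varepsilon=-1$) that $t\,(u+2\varepsilon)=v^2$; note $v\in\Z$ because the entries of $g$ force $av,bv,cv\in\Z$, hence $\gcd(a,b,c)\,v\in\Z$, and a rational number with integral square is an integer.

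It remains to show $t$ and $u+2\varepsilon$ are perfect squares. Let $d:=\gcd(t,u+2\varepsilon)$; then $d$ divides $\gcd(u-2\varepsilon,u+2\varepsilon)$, which divides $4\varepsilon$, so $d\mid4$, and from $t(u+2\varepsilon)=v^2$ one may write $t=d t_1^2$, $u+2\varepsilon=d s_1^2$ with $\gcd(t_1,s_1)=1$. The point I expect to be the real heart of the argument is excluding $d=2$: if $d=2$, then $w=(u+2\varepsilon)-4\varepsilon=2s_1^2-4\varepsilon$ equals $Dt=2Dt_1^2$, giving $s_1^2-Dt_1^2=2\varepsilon$; but $D\equiv b^2\pmod4$, so $s_1^2-Dt_1^2\equiv s_1^2-(bt_1)^2\in\{0,1,3\}\pmod4$, contradicting $2\varepsilon\equiv2\pmod4$. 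Hence $d\in\{1,4\}$ is a square, and setting $\alpha:=\sqrt{d}\,s_1$, $\beta:=\sqrt{d}\,t_1$ with signs chosen so that $v=\alpha\beta$ yields nonzero integers with $u=\alpha^2-2\varepsilon$, $v=\alpha\beta$ and $\alpha^2-D\beta^2=(u+2\varepsilon)-Dt=(u+2\varepsilon)-w=4\varepsilon$, as required. (When $D$ is a square the hypothesis is vacuous, since then $w>0$ has no solution of the Pell equation, or directly $SO^+(L)$ is trivial by Proposition~\ref{PROP_generator_of_isometry}.)
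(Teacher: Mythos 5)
Your proof is correct, and its skeleton is the same as the paper's: reduce everything to the integrality of $(g-\varepsilon I_2)Q_L^{-1}$ via Lemma~\ref{LEM_action_disc_grp}, deduce $D\mid u-2\varepsilon$, show the quotient is a perfect square, and handle the converse by a parity check on the off-diagonal entries. Two sub-steps are executed differently, both legitimately. For the divisibility $D\mid u-2\varepsilon$, the paper just notes that $D$ must divide $\det(g-\varepsilon I_2)=2-\varepsilon u$ since $(g-\varepsilon I_2)Q_L^{-1}$ is an integer matrix with determinant $(2-\varepsilon u)/(-D)$; your index count $[L^*:(g-\varepsilon I_2)L^*]=[L^*:L]\cdot[L:(g-\varepsilon I_2)L^*]$ is a valid repackaging of the same fact. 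For the squareness of $t=(u-2\varepsilon)/D$, the paper argues prime by prime on the valuation of $m=t$ in $v^2=m(mD+4\varepsilon)$, treating $p=2$ by a mod $8$ and mod $4$ case analysis; your route via $d=\gcd(t,u+2\varepsilon)\mid 4$ and the exclusion of $d=2$ by reducing $s_1^2-Dt_1^2=2\varepsilon$ modulo $4$ (using $D\equiv b^2\pmod 4$, i.e.\ the evenness of $L$) is a cleaner organization of the same arithmetic, and the evenness of $L$ is the crucial input in both. Your argument that $\alpha\equiv b\beta\pmod 2$ in the easy direction likewise matches the paper's, which phrases it as a contradiction from assuming $v\pm b\beta^2$ odd. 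I see no gaps.
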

\begin{proof}
Recall that we have $u,kv \in \Z$ and $u>0$,
 where $k:=\operatorname{gcd}(a,b,c)$.
Moreover, since $g\neq 1$, it follows that $u>2$.
We have
\begin{equation}
 \det(g-\varepsilon \cdot I_2)
 =\det(g)-\varepsilon \cdot \tr(g)+1
 =-\varepsilon u+2
\end{equation}
 and
 \begin{align}
  (g-\varepsilon \cdot I_2) \cdot Q_L^{-1} &= \frac{-1}{D}
  \begin{pmatrix}
   (u-bv)/2-\varepsilon & -cv \\
   av & (u+bv)/2-\varepsilon
  \end{pmatrix}
  \begin{pmatrix}2c&-b\\-b&2a\end{pmatrix} \\
  &= \frac{-1}{D}
  \begin{pmatrix}
   c(u-2\varepsilon)  & (Dv-b(u-2\varepsilon))/2 \\
   -(Dv+b(u-2\varepsilon))/2 & a(u-2\varepsilon) \label{EQ_mat_g_epsilon}
  \end{pmatrix}.
 \end{align}

Assume that $g$ acts on $A(L)$ as $\varepsilon \cdot \id$.
By Lemma \ref{LEM_action_disc_grp},
 $(g-\varepsilon \cdot I_2) \cdot Q_L^{-1}$ is an integer matrix,
 and hence
 $D$ divides $\det(g- \varepsilon \cdot I_2)=-\varepsilon u+2$.
Thus $u-2\varepsilon=mD$ for some $m\in \Z_{>0}$.
Therefore
\begin{equation} \label{EQ_v_square}
 v^2=\frac{1}{D}(u^2-4)=m(mD+4 \varepsilon).
\end{equation}
In particular, we have $v\in \Z$.

{\bf Claim.}
$m$ is a square number.

This is shown by applying (\ref{EQ_v_square}) as follows.
Suppose that $p$ is a prime number and $p^e \mid\mid m$
 with $e\in \Z_{>0}$
 (that is, $p^e \mid m$ and $p \nmid (m/p^e)$).
It is enough to show that $e$ is always even.
(i) If $p$ is odd, then $p^e \mid\mid m(mD+4 \varepsilon)=v^2$.
Hence $e$ is even.
(ii) In the case $p=2$, if $e\geq 3$,
 then $mD+4 \varepsilon \equiv 4 \pmod 8$.
Hence $2^{e+2} \mid\mid m(mD+4\varepsilon)=v^2$,
 and thus $e$ is even.
If $e=1$, then
 $(v/2)^2=(m/2)^2 D+\varepsilon m \equiv 2$ or $3 \pmod 4$
 because $D=b^2-4ac\equiv 0$ or $1 \pmod 4$.
On the other hand, $v$ is even and
 $(v/2)^2\equiv 0$ or $1 \pmod 4$,
 which is a contradiction.
This completes the proof of Claim.

By Claim, we have $m=n^2$ for some $n\in \Z_{>0}$ and
\begin{equation}
 u-2\varepsilon=n^2D.
\end{equation}
By (\ref{EQ_v_square}),
 we have $v^2=n^2(n^2 D+4\varepsilon)$.
Thus $v=\alpha\beta$ and
 $n^2D+4\varepsilon=\alpha^2$ for some $\alpha \in \Z_{>0}$
 and $\beta\in \{ \pm n \}$.
This implies (\ref{EQ_D_u_v}).

Conversely, assume that $D$ and $(u,v)$ satisfy (\ref{EQ_D_u_v})
 for some nonzero integers $\alpha,\beta$.
In order to show that
 $g$ acts on $A(L)$ as $\varepsilon \cdot \id$,
 it is enough to check that the matrix (\ref{EQ_mat_g_epsilon})
 is an integer matrix
 by Lemma \ref{LEM_action_disc_grp}.
By (\ref{EQ_D_u_v}), we have
\begin{equation}
 \frac{u-2\varepsilon}{D}=\beta^2, \quad
 \frac{Dv \pm b(u-2\varepsilon)}{2D}
 =\frac{v \pm b \beta^2}{2}.
\end{equation}
If $v \pm b \beta^2$ is odd, then
\begin{equation}
 1 \equiv v \pm b \beta^2 \equiv \alpha \beta+b \beta
 = (\alpha+b) \beta, \quad
 \alpha+b \equiv \beta \equiv 1 \pmod 2,
\end{equation}
 and thus
\begin{equation}
 \alpha+1 \equiv b\equiv b^2-4ac=
 D=(\alpha^2-4 \varepsilon)/\beta^2 \equiv \alpha^2-4 \varepsilon
 \equiv \alpha \pmod 2,
\end{equation}
 which is a contradiction.
Hence $v \pm b \beta^2$
 is even and the matrix (\ref{EQ_mat_g_epsilon})
 is an integer matrix.
Therefore
 $g$ acts on $A(L)$ as $\varepsilon \cdot \id$.
\end{proof}

%

Thanks to Proposition \ref{PROP_trace_g},
 the proof of Main Theorem is reduced to showing
\begin{align}
 A_\varepsilon :=& \{ \alpha \in \Z_{>0} \bigm|
 \disc(L)=-(\alpha^2-4\varepsilon)/\beta^2 ~
 (\exists \beta \in \Z_{>0}, ~ \exists L \in \Lprime)
 \} \\
 =&
 \begin{cases}
  \Z_{\geq 4} & \text{if\ \ } \varepsilon=1, \\
  \Z_{\geq 4} \setminus \{ 5,7,13,17 \}
   & \text{if\ \ } \varepsilon=-1.
 \end{cases}
\end{align}

\subsection{Symplectic case}

For $\alpha \in A_{+1}$, we have $\alpha \geq 3$.
If $\alpha=3$, then there exists $L\in \Lprime$
 such that $\disc(L)=-5$.
However,
 it follows from the table of indefinite binary quadratic forms
 \cite[Table 15.2]{CS} that
\begin{equation}
 L \cong \begin{pmatrix} 2& 1\\ 1&-2 \end{pmatrix}
 \not \in \Lprime,
\end{equation}
 which is a contradiction.
%
%
%
Hence $\alpha\geq 4$.

Conversely, for $\alpha \in \Z_{\geq 4}$, we set
\begin{equation}
 L:=\begin{pmatrix} 2& \alpha \\ \alpha & 2 \end{pmatrix}
 \in \mathbb{L}_{(1,1)}, \quad
 \disc(L)=-(\alpha^2-4).
\end{equation}
By \cite[Example 4]{GLP}, we have $L\in\Lprime$,
 and thus $\alpha \in A_{+1}$.

\subsection{Anti-symplectic case}

Now we consider the case $\varepsilon=-1$.
Let $\alpha \in \Z_{>0}$.

\subsubsection{Case of odd $\alpha$}
Suppose that $\alpha$ is odd.

First we assume that
$\alpha^2+4$ is not square-free, that is,
 $D:=\alpha^2+4=n^2 D_0$ with $n>1$.
Since $D\equiv 1 \pmod 4$, we have $D_0\equiv 1 \pmod 4$.
We define an even lattice $L$ by
\begin{equation}
 L:=\begin{pmatrix} 2n&n \\ n& -n(D_0-1)/2 \end{pmatrix},
 \quad
 \disc(L)=-n^2 D_0=-D.
\end{equation}
Since $D$ is not a square number, it follows that $L \in \Lprime$.
Hence $\alpha\in A_{-1}$ in this case.

Next we assume that $D:=\alpha^2+4$ is square-free.
Then $D$ is the discriminant of the real quadratic field
 $\Q(\sqrt{D})$.
By Lemma \ref{LEM_class_number},
 there exists $L\in \Lprime$ with $\disc(L)=-D$
 if and only if $h^+(D)>1$.
Therefore, in this case,
 the conditions
 $\alpha \in A_{-1}$ 
 and
 $\alpha \not\in \{ 1,3,5,7,13,17 \}$
 are equivalent
 by Theorem \ref{THM_Biro} (and Remark \ref{REM_negative_Pell}) below.


\begin{theorem}(\cite{Bi}) \label{THM_Biro}
For an odd integer $k>0$ such that $D:=k^2+4$ is square-free,
 the class number $h(D)$ of $\Q(\sqrt{D})$ is $1$
 if and only if $k \in \{ 1,3,5,7,13,17 \}$.
\end{theorem}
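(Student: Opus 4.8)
Theorem \ref{THM_Biro} is Yokoi's conjecture, established by Bir\'o \cite{Bi}; I would treat it as a black box rather than reprove it, since its analytic core admits no short argument. Still, the shape of such a proof is as follows. The plan starts from the analytic class number formula. First I would record the features special to the family $D=k^2+4$ with $k$ odd and $D$ square-free: since $k^2\equiv1\pmod8$ one has $D\equiv5\pmod8$, so $D$ is a fundamental discriminant, the prime $2$ is inert in $\Q(\sqrt D)$, and (because $k^2-D\cdot1^2=-4$ is a solvable negative Pell relation) the fundamental unit is
\[
 \eta_D=\frac{k+\sqrt D}{2},\qquad N_{\Q(\sqrt D)/\Q}(\eta_D)=-1.
\]

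The decisive point is that $\eta_D$ is \emph{small}: the continued fraction of $\tfrac{1+\sqrt D}{2}$ has period one in this family, and $\sqrt D=k+2/k+O(k^{-3})$ gives $\log\eta_D=\tfrac12\log D+O(D^{-1})$. Plugging this into the Dirichlet formula $2h(D)\log\eta_D=\sqrt D\,L(1,\chi_D)$ (with $\chi_D=\bigl(\tfrac{D}{\cdot}\bigr)$ the primitive quadratic character of conductor $D$) shows that $h(D)=1$ is equivalent to $L(1,\chi_D)=\tfrac{\log D}{\sqrt D}(1+o(1))$, i.e.\ to $L(1,\chi_D)$ being essentially as small as a positive real Dirichlet $L$-value can be, given that the regulator is already of minimal order.

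From there I would proceed in three steps. (1) \emph{Ineffective finiteness:} Siegel's lower bound $L(1,\chi_D)\gg_\epsilon D^{-\epsilon}$ contradicts the displayed smallness for $D$ large, and Tatuzawa's effective refinement reduces matters to at most one undetermined square-free $D$. (2) \emph{The exact identity --- the heart of the matter:} following Bir\'o, I would derive an \emph{exact} finite formula for $L(1,\chi_D)$ of Kronecker-limit-formula type, whose terms are controlled by the (very short) continued-fraction period of $\tfrac{1+\sqrt D}{2}$; combining it with the size constraint from step (1) yields a congruence/Diophantine obstruction that fails once $k$ exceeds an explicit bound, thereby removing the possible exceptional $D$ and making the whole argument effective. (3) \emph{Finite check:} I would then tabulate $h(D)$ for the finitely many surviving $k$ --- equivalently, inspect the list of indefinite binary quadratic forms of the relevant small discriminants, cf.\ \cite[Table 15.2]{CS} --- and read off $k\in\{1,3,5,7,13,17\}$.

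The hard part is unambiguously step (2): producing the special-value identity and squeezing an effective bound out of it is the entire substance of \cite{Bi}, and there is no shortcut. Everything else --- the congruence $D\equiv5\pmod8$, the inertness of $2$, the computation of $\eta_D$ and of its norm, the class number formula, and the terminal numerical verification --- is routine.
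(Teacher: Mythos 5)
The paper does not prove this theorem either --- it is quoted verbatim from Bir\'o's resolution of Yokoi's conjecture \cite{Bi} --- so your decision to cite it as a black box is exactly what the authors do, and no gap arises. Your sketch of Bir\'o's argument (norm $-1$ fundamental unit $\tfrac{k+\sqrt D}{2}$, minimal regulator, Tatuzawa's effective Siegel bound, exact special-value identities, terminal finite check) is a fair summary; the only inessential inaccuracy is that Bir\'o's key identities are for partial zeta values at $s=0$, which he converts into congruences modulo auxiliary primes, rather than a Kronecker-limit-type formula at $s=1$.
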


\begin{remark} \label{REM_negative_Pell}
In Theorem \ref{THM_Biro},
 since the equation $u^2-D v^2=-4$ has an integer solution
 (e.g.\ $(u,v)=(k,1)$), we have $h^+(D)=h(D)$.
\end{remark}


\subsubsection{Case of even $\alpha$}
Suppose that $\alpha$ is even and $\alpha \in A_{-1}$.
%
If $\alpha=2$, then there exists $L\in \Lprime$ such that
 $\disc(L)=-8$ or $-2$.
By \cite[Table 15.2]{CS}, we have
\begin{equation}
 L \cong \begin{pmatrix} -2& 0\\ 0&4 \end{pmatrix}
 \not\in \Lprime,
\end{equation} 
 which is a contradiction.
Therefore $\alpha\geq 4$.

Conversely, for even $\alpha\geq 4$,
 we define an even lattice $L$ by
\begin{equation}
 L:=\begin{pmatrix} \alpha&2 \\ 2& -\alpha \end{pmatrix}, \quad
 \disc(L)=-(\alpha^2+4).
\end{equation}
We show $L\in\Lprime$.
It is easy to check this for the case that
 $\alpha$ is divisible by $4$.
%
In the case $\alpha \equiv 2 \pmod 4$,
 we consider the following Gram matrix of $L$:
\begin{equation} 
 P^T\cdot
 \begin{pmatrix} \alpha&2 \\ 2& -\alpha \end{pmatrix}
 \cdot P =
 \begin{pmatrix} 4&0\\ 0& -(\alpha^2/4+1) \end{pmatrix}, \quad
 \text{where} \quad
 P=\begin{pmatrix} 1 & (\alpha-2)/4 \\ 1 & (\alpha+2)/4 \end{pmatrix}.
\end{equation}
We can apply Theorem \ref{Pell_eq} below
 for $\Delta=\alpha^2/4+1$, $a=2$ and $b=(\alpha^2/4+1)/2$
 to show $L\in\Lprime$.
Indeed, we have the continued fraction expansion
\begin{equation}
 \sqrt{\alpha^2/4+1}=[\alpha/2,\alpha,\alpha,\alpha,\ldots].
\end{equation}
The length of the period of this continued fraction
 is $1$.


\begin{theorem} \label{Pell_eq}(\cite{Mo})
Suppose that an integer $\Delta >2$ is not a square number.
Then the length of the period of the continued fraction expansion of
 $\sqrt{\Delta}$ is even if and only if one of the following holds.
\begin{enumerate}
\item There exists a factorization $\Delta=ab$ with $1< a < b$ such that
\begin{equation}
ax^2-by^2=\pm 1
\end{equation}
has an integer solution.
\item There exists a factorization $\Delta=ab$ with $1\leq a<b$ suth that
\begin{equation}
ax^2-by^2=\pm 2
\end{equation}
has an integer solution with $xy$ odd.
\end{enumerate}
\end{theorem}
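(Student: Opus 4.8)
The plan is to argue through the continued fraction expansion of $\sqrt{\Delta}$, together with the reduction theory of indefinite binary quadratic forms of discriminant $4\Delta$ (equivalently, of ideals in the order $\Z[\sqrt{\Delta}]$). Write $\sqrt{\Delta}=[a_0;\overline{a_1,\dots,a_{\ell-1},2a_0}]$ and introduce the usual integer sequences $(P_i,Q_i)_{i\ge0}$ defined by $P_0=0$, $Q_0=1$, $a_i=\lfloor(P_i+\sqrt{\Delta})/Q_i\rfloor$, $P_{i+1}=a_iQ_i-P_i$ and $Q_{i+1}=(\Delta-P_{i+1}^2)/Q_i$. I would use the standard facts that $Q_i>0$, that $Q_i=1$ exactly when $\ell\mid i$, that $\Delta=P_i^2+Q_{i-1}Q_i$ for $i\ge1$, that the convergents satisfy $p_{i-1}^2-\Delta q_{i-1}^2=(-1)^iQ_i$, that $0<P_i<\sqrt{\Delta}$ and $0<Q_i<2\sqrt{\Delta}$ for $1\le i\le\ell$, and that the period is palindromic, $Q_i=Q_{\ell-i}$ and $P_i=P_{\ell+1-i}$. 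The conceptual backbone of the proof is the classical description of the self-conjugate (ambiguous) members of the principal cycle of reduced forms/ideals: they are precisely the identity and, when $\ell$ is even, the one at the midpoint $i=\ell/2$. Equivalently, the principal cycle contains a \emph{nontrivial} ambiguous reduced form/ideal if and only if $\ell$ is even, and conditions (1) and (2) are nothing but the two ways of encoding such an object through a divisor $a$ of $\Delta$.

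For the direction ``$\ell$ even $\Rightarrow$ (1) or (2)'', put $\ell=2m$. Evaluating the palindromic symmetry at $i=m$ gives $P_m=P_{m+1}$ and $Q_{m-1}=Q_{m+1}$, and feeding this into the recursion yields $2P_m=a_mQ_m$ together with $\Delta=P_m^2+Q_{m-1}Q_m$. Since $Q_m\mid 2P_m$, writing $g:=\gcd(Q_m,P_m)$ forces $Q_m/g\mid 2$, so there are exactly two cases. If $Q_m\mid P_m$, then $Q_m\le P_m<\sqrt{\Delta}$ and $Q_m\mid\Delta$; putting $a:=Q_m$, $b:=\Delta/a$, one has $1<a<b$, and extracting a generator of the (principal, ambiguous) midpoint form/ideal of norm $Q_m$ gives an integer solution of $ax^2-by^2=\pm1$, i.e.\ (1). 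If only $(Q_m/2)\mid P_m$, then $Q_m$ is even, $(Q_m/2)\mid\Delta$, and with $a:=Q_m/2<\sqrt{\Delta}$, $b:=\Delta/a$, one has $1\le a<b$ and, in the same way, an integer solution of $ax^2-by^2=\pm2$ with $xy$ odd, i.e.\ (2). In both cases $Q_m>1$ (because $Q_m=1$ forces $\ell\mid m$, impossible for $0<m<\ell$), and the hypothesis $\Delta>2$ is precisely what rules out the degenerate collapses, so one genuinely lands in (1) or (2).

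For the converse, suppose $\Delta=ab$ with $a<b$ and $ax_0^2-by_0^2=\delta$, where $\delta\in\{\pm1\}$, or $\delta\in\{\pm2\}$ and $x_0y_0$ is odd. Consider the ambiguous ideal $\mathfrak{a}=\Z a+\Z\sqrt{\Delta}$ of $\Z[\sqrt{\Delta}]$, of norm $a$; the element $ax_0+y_0\sqrt{\Delta}$ lies in $\mathfrak{a}$ and has norm $a\delta$. When $\delta=\pm1$ this forces $(ax_0+y_0\sqrt{\Delta})=\mathfrak{a}$, so $\mathfrak{a}$ is a nontrivial ($a>1$) ambiguous principal ideal; when $\delta=\pm2$ and $x_0y_0$ is odd, the element $ax_0+y_0\sqrt{\Delta}$ has norm $2a$ and is primitive (this is where ``$xy$ odd'' is used), so $(ax_0+y_0\sqrt{\Delta})$ is a nontrivial primitive ambiguous principal ideal. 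Either way the trivial class contains a nontrivial ambiguous reduced form/ideal, so by the structural fact $\ell$ is even.

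The hard part---and where the genuine work lies---is the interface between an arbitrary primitive representation and the \emph{reduced} ambiguous object of the principal cycle. In both directions one must verify that the ambiguous object written down is really nontrivial and really sits at the midpoint of the cycle rather than collapsing, and the $2$-adic bookkeeping that decides between the ``$\pm1$'' and ``$\pm2$, $xy$ odd'' alternatives (and that pins down the ``$xy$ odd'' clause) is the most delicate point. This is also exactly where the hypothesis $\Delta>2$ enters. All the remaining ingredients---the recursions and identities for $(P_i,Q_i)$, the convergent relation, the palindromic symmetry, and the enumeration of ambiguous members of the principal cycle---are classical (cf.\ \cite{Mo}).
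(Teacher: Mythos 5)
First, a point of comparison: the paper does not prove Theorem \ref{Pell_eq} at all --- it is imported verbatim from Mollin \cite{Mo} and used as a black box in the anti-symplectic even-$\alpha$ case. So there is no internal proof to measure you against; your proposal has to stand on its own as a proof of Mollin's theorem. Your strategy is the standard one (and, as far as the route goes, the one in the cited source): use the palindromic symmetry $Q_i=Q_{\ell-i}$, $P_i=P_{\ell+1-i}$ to place a nontrivial ambiguous reduced ideal at the midpoint of the principal cycle exactly when $\ell$ is even, and observe that conditions (1) and (2) are the two ways of encoding such an ideal by a divisor $a$ of $\Delta$. The forward direction as you set it up is essentially right: $2P_m=a_mQ_m$ and $\Delta=P_m^2+Q_{m-1}Q_m$ do give $Q_m\mid 2P_m$, the dichotomy $Q_m\mid P_m$ versus $(Q_m/2)\mid P_m$, and $Q_m>1$ since $0<m<\ell$.

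The difficulty is that what you call the ``conceptual backbone'' and the ``hard part'' are asserted rather than proved, and they are the content of the theorem. Concretely: (a) in the converse direction, ``the trivial class contains a nontrivial ambiguous reduced form, so by the structural fact $\ell$ is even'' requires you to show that $[a,\sqrt{\Delta}]$ (resp.\ the norm-$2a$ ideal) is actually \emph{reduced} --- e.g.\ via $N(\mathfrak{a})=a<\sqrt{\Delta}$ --- and that conjugation reverses the cycle, so an ambiguous member at position $i$ with $0<i<\ell$ forces $i=\ell/2$; without this the ``structural fact'' is circular. (b) In the forward direction, ``extracting a generator gives an integer solution of $ax^2-by^2=\pm1$'' hides a divisibility claim: the convergent identity gives $p_{m-1}^2-\Delta q_{m-1}^2=(-1)^mQ_m$ with $a=Q_m\mid\Delta$, hence only $a\mid p_{m-1}^2$; to rewrite this as $aX^2-bY^2=\pm1$ you need $a\mid p_{m-1}$, i.e.\ that the principal ideal $(p_{m-1}+q_{m-1}\sqrt{\Delta})$ coincides with $[a,\sqrt{\Delta}]$, which is where primitivity and $\gcd(a,b)$ considerations must be handled. (c) The derivation of the clause ``$xy$ odd'' in case (2), and the role of $\Delta>2$ in excluding degenerate collapses, are exactly the points you defer with ``this is the most delicate point.'' Naming the hard step is not the same as doing it, so as written this is a correct road map consistent with \cite{Mo}, but not yet a proof.
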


%


\section{Applications}
\label{SECT_app}

In this section
 we discuss applications of our result.

First we give direct consequences of Main Theorem.
Let $\varphi$ be as in Main Theorem, that is,
 $\varphi$ is an automorphism
 of a projective K3 surface $X$ with Picard number $2$ such that
 $\ord(\varphi)=\infty$ and
 $\varphi^* \omega_X=\varepsilon \omega_X$
 ($\varepsilon \in \{ \pm 1 \})$.
Then $\tr(\varphi^*|_{S_X})=\alpha^2 - 2 \varepsilon$
 for some $\alpha \in A_{\varepsilon}$.
By \cite{O}, each fixed point of $\varphi$ on $X$ is isolated.
By the topological Lefschetz fixed point formula,
 the number of fixed points (with multiplicity) of $\varphi$
 is given as
\begin{equation} \label{EQ_Lefschetz_number}
 \sum_{i=0}^{4} (-1)^i \tr(\varphi^*|_{H^i(X,\C)})
 =1+(20 \varepsilon+\tr(\varphi^*|_{S_X}))+1=\alpha^2+18\varepsilon+2.
\end{equation}
As another consequence, we determine
 the spectral radius of $\varphi^*|_{H^2(X,\C)}$,
 which is defined as the maximum absolute value $|\lambda|$
 of eigenvalues $\lambda$ of $\varphi^*|_{H^2(X,\C)}$.
This plays an important role in the study of complex dynamics of K3 surfaces
 (see \cite{mcmullen1}).
In our case, the spectral radius is given as
\begin{equation}
 \frac{u+\sqrt{u^2-4}}{2}
 =
 \frac{\alpha^2 - 2 \varepsilon + \alpha \sqrt{\alpha^2 - 4\varepsilon}}{2},
\end{equation}
 where $u=\alpha^2 - 2 \varepsilon$.


Next we show that
 a fixed-point-free automorphism of a projective K3 surface
 with Picard number $2$ is nothing but
 the following Cayley--Oguiso automorphism:

\begin{theorem}(\cite{O}) \label{THM_Oguiso-example}
Any K3 surface $X$ with
\begin{equation} 
 S_X \cong \begin{pmatrix} 4& 2\\2&-4 \end{pmatrix}
\end{equation}
 admits a fixed-point-free automorphism $\varphi$
 of positive entropy with
\begin{equation}
 \varphi^*|_{S_X}=
 \begin{pmatrix} 5&8\\ 8&13 \end{pmatrix}, \quad
 \varphi^*|_{T_X}=-\id.
\end{equation}
\end{theorem}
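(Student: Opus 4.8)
The plan is to verify Theorem \ref{THM_Oguiso-example} by combining Proposition \ref{PROP_pic_2_sx} with the criterion of Proposition \ref{PROP_inf_S_X}, thus producing the automorphism $\varphi$ on every K3 surface $X$ with the prescribed Picard lattice, and then invoking \cite{O} (or the Lefschetz computation \eqref{EQ_Lefschetz_number}) for the fixed-point-free property. First I would set $L=\left(\begin{smallmatrix}4&2\\2&-4\end{smallmatrix}\right)$, so $a=2$, $b=2$, $c=-2$, $D=-\disc(L)=b^2-4ac=4+16=20>0$, and $L$ has signature $(1,1)$; by Proposition \ref{PROP_pic_2_sx} there is a projective K3 surface $X$ with $S_X\cong L$, and in fact \emph{any} such $X$ will do once we check the lattice-theoretic conditions. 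I would record that $\gcd(a,b,c)=2$, so the relevant Pell equation after clearing the gcd is $u^2-5v^2=4$ (taking $a/k=1$, etc.), with smallest positive solution $(u,v)=(3,1)$; the matrix $g_{u,v}$ from \eqref{EQ_u} with $u=3$, $v=1$ is $\left(\begin{smallmatrix}(3-2)/2&2\\2&(3+2)/2\end{smallmatrix}\right)$, which is not integral, so instead the first \emph{integral} element corresponds to the next solution and equals $g=\left(\begin{smallmatrix}5&8\\8&13\end{smallmatrix}\right)$ (one checks $g^T Q_L g=Q_L$, $\det g=1$, $\tr g=18$).

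Next I would confirm the four conditions of Proposition \ref{PROP_inf_S_X} with $\varepsilon=-1$. Condition (1), $S_X\in\Lprime$: since $D=20$ is not a perfect square there is no isotropic vector, and one checks directly from the form $4x^2+4xy-4y^2=4(x^2+xy-y^2)$ that the value $-2$ is not represented (it would force $x^2+xy-y^2$ to be a non-integer, impossible). Condition (2): $g\neq 1$ and $g\in SO^+(L)$, which follows since $\det g=1$ and $g$ has positive eigenvalues $(18\pm\sqrt{320})/2=9\pm 2\sqrt{20}>0$, hence preserves $C_L$. Condition (3): I would compute $A(L)=L^*/L$; here $\disc L=-20$, so $|A(L)|=20$, and by Lemma \ref{LEM_action_disc_grp} it suffices to check that $(g+I_2)Q_L^{-1}$ is integral, i.e.\ that $g$ acts as $-\id$ on $A(L)$ — equivalently one verifies \eqref{EQ_D_u_v} of Lemma \ref{LEM_disc_triv} holds with $\alpha=3$, $\beta=1$, $\varepsilon=-1$, since indeed $\alpha^2-D\beta^2=9-20=-11$... so I would instead take the correct Diophantine data: with $u=\tr g=18$ and the reduced $D=5$ we need $\alpha^2-2\varepsilon=u$ after the appropriate normalization, giving $\alpha=4$ is wrong too, so the clean route is simply the direct matrix check that $(g+I_2)Q_L^{-1}\in M_2(\Z)$, which is a one-line computation. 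Condition (4): we are free to \emph{set} $h=-\id$ on $T_X$, since $-\id\in O(T_X)$ always, and then the pair $(g,h)$ has matching action on $A(S_X)=A(T_X)$ by condition (3); the global Torelli theorem then yields $\varphi\in\Aut(X)$ with $\varphi^*|_{S_X}=g$, $\varphi^*|_{T_X}=-\id$, hence $\varphi^*\omega_X=-\omega_X$, and $\ord(\varphi)=\infty$ because $SO^+(S_X)\cong\Z$.

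Finally, for the fixed-point-free statement I would cite Oguiso \cite{O} directly, or alternatively observe that with $\varepsilon=-1$ the normalization of Lemma \ref{LEM_disc_triv} gives $u=\alpha^2+2$ with $\alpha$ such that $\alpha^2D_{\mathrm{red}}... $ — concretely, matching $u=18$ forces $\alpha^2=16$, $\alpha=4$, so the Lefschetz number \eqref{EQ_Lefschetz_number} equals $\alpha^2+18\varepsilon+2=16-18+2=0$, and since by \cite{O} every fixed point of $\varphi$ is isolated (counted with positive multiplicity), a total of $0$ forces $\varphi$ to have no fixed points; positivity of entropy is immediate since the spectral radius $9+2\sqrt{20}>1$. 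The main obstacle I anticipate is purely bookkeeping: getting the Pell-equation normalization right when $\gcd(a,b,c)=2\neq 1$, so that the correct integral generator of the relevant subgroup of $SO^+(S_X)$ is the given matrix $\left(\begin{smallmatrix}5&8\\8&13\end{smallmatrix}\right)$ (equivalently, identifying which power of the fundamental unit of $\Q(\sqrt5)$ produces an isometry acting as $-\id$ on the discriminant group); once that is pinned down, every step above is a short explicit verification.
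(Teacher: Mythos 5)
Your proposal is essentially correct, but it takes a genuinely different route from the paper: the paper gives no proof of Theorem \ref{THM_Oguiso-example} at all, simply quoting it from Oguiso \cite{O}, whereas you reconstruct it from the paper's own Torelli machinery (Propositions \ref{PROP_pic_2_sx} and \ref{PROP_inf_S_X}, Lemmas \ref{LEM_action_disc_grp} and \ref{LEM_disc_triv}), importing from \cite{O} only the fact that the fixed points are isolated. This is a legitimate and arguably more informative derivation --- it is exactly the argument that the paper's Proposition \ref{PROP_key_prop} and Theorem \ref{THM_fixed-pint-free} are built to support, run in the existence direction --- and all the individual verifications you list (non-representability of $0$ and $-2$ by $4(x^2+xy-y^2)$, the identity $U^TQ_LU=Q_L$ with $\det U=1$ and positive eigenvalues $9\pm4\sqrt5$, the integrality of $(U+I_2)Q_L^{-1}$, the vanishing Lefschetz number, and the spectral radius $9+4\sqrt5>1$) do check out. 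What the paper's citation buys is brevity; what your route buys is independence from Oguiso's original construction except for the isolatedness of fixed points, which the paper itself also takes from \cite{O} in Section \ref{SECT_app}.

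The one place you should tighten is the Pell-equation bookkeeping, which you flag as your main obstacle and then leave unresolved. Work throughout with the \emph{unreduced} discriminant $D=20$ and recall from Proposition \ref{PROP_generator_of_isometry} that solutions need only satisfy $u,kv\in\Z$ with $k=\gcd(a,b,c)=2$: the smallest positive solution of $u^2-20v^2=4$ is then $(u,v)=(3,\tfrac12)$, giving the integral generator $g_{3,1/2}=\begin{pmatrix}1&1\\1&2\end{pmatrix}$ of $SO^+(S_X)$, and $U=\begin{pmatrix}5&8\\8&13\end{pmatrix}=g_{3,1/2}^{3}=g_{18,4}$ is its cube, not ``the first integral element.'' Likewise, in Lemma \ref{LEM_disc_triv} the correct data is $(\alpha,\beta)=(4,1)$ with $\varepsilon=-1$: indeed $\alpha^2-D\beta^2=16-20=-4=4\varepsilon$ and $(u,v)=(\alpha^2-2\varepsilon,\alpha\beta)=(18,4)$, so $\alpha=4$ is not ``wrong'' --- your error was testing it against the reduced discriminant $5$ rather than $D=20$. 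With that substitution, condition (3) of Proposition \ref{PROP_inf_S_X} follows directly from Lemma \ref{LEM_disc_triv} and your fallback matrix computation becomes a consistency check rather than a necessity; the rest of your argument stands as written.
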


\begin{remark}
In Theorem \ref{THM_Oguiso-example},
 $\Aut(X)$ is generated by $\varphi$
 \cite[Theorem 1.1]{Cayley-Oguiso-auto}.
\end{remark}



\begin{theorem} \label{THM_fixed-pint-free}
Any fixed-point-free automorphism $\varphi$
 of a projective K3 surface $X$ with Picard number $2$
 is of the form in Theorem \ref{THM_Oguiso-example}.
\end{theorem}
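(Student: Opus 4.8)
The plan is to determine $\varphi^*$ on $H^2(X,\Z)$ via the two Lefschetz fixed point formulas, thereby reducing to the anti‑symplectic, infinite‑order case already covered by Main Theorem, and then to finish with a short lattice computation. First I would apply the holomorphic Lefschetz (Atiyah--Bott) fixed point formula to $\varphi$. Since $\varphi$ is fixed‑point‑free the right‑hand side is empty, so $0=\sum_q(-1)^q\tr(\varphi^*|_{H^q(X,\sO_X)})=1+\zeta^{-1}$, where $\varphi^*\omega_X=\zeta\omega_X$ (for a K3 surface $H^1(X,\sO_X)=0$ and, by Serre duality, $\varphi^*$ acts on $H^2(X,\sO_X)\cong H^0(X,\Omega^2_X)^{\vee}$ by $\zeta^{-1}$). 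Hence $\zeta=-1$: $\varphi$ is anti‑symplectic. Next I claim $\varphi^*|_{T_X}=-\id$. Since $T_X$ is the smallest primitive sublattice of $H^2(X,\Z)$ whose complexification contains $\omega_X$, and $\varphi^{*2}$ fixes $\omega_X$, the primitive sublattice $T_X^{\varphi^{*2}}$ — which also contains $\omega_X$ in its complexification — equals $T_X$, so $\varphi^{*2}|_{T_X}=\id$. Moreover $1$ is not an eigenvalue of $\varphi^*|_{T_X}$: a vector $t\in T_X$ with $\varphi^*t=t$ satisfies $\langle t,\omega_X\rangle=\langle\varphi^*t,\varphi^*\omega_X\rangle=-\langle t,\omega_X\rangle$, so $t\in S_X\cap T_X=0$. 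An order‑$\le2$ isometry without eigenvalue $1$ is $-\id$, proving the claim.

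The topological Lefschetz fixed point formula (cf.\ (\ref{EQ_Lefschetz_number})) then reads $0=2+\tr(\varphi^*|_{S_X})+\tr(\varphi^*|_{T_X})=\tr(\varphi^*|_{S_X})-18$, so $\tr(\varphi^*|_{S_X})=18$; in particular $\varphi^*|_{S_X}$ has an eigenvalue $>1$ and hence infinite order, so $\ord(\varphi)=\infty$. Thus $\varphi$ is as in Main Theorem with $\varepsilon=-1$, and Propositions \ref{PROP_inf_S_X} and \ref{PROP_key_prop} apply: $S_X\in\Lprime$, $\varphi^*|_{S_X}=g_{u,v}\in SO^+(S_X)$ with $u=18$, and $(u,v)=(\alpha^2+2,\alpha\beta)$ with $\alpha^2-D\beta^2=-4$, $D:=-\disc(S_X)>0$, for nonzero integers $\alpha,\beta$. (Equivalently, Main Theorem together with (\ref{EQ_Lefschetz_number}) gives $\alpha^2-16=0$ at once.) From $\alpha^2+2=18$ we get $\alpha=4$, hence $D\beta^2=20$, forcing $\beta=\pm1$, $D=20$ or $\beta=\pm2$, $D=5$.

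It remains to identify $S_X$ and the action. If $D=5$, then by \cite[Table 15.2]{CS} $S_X\cong\bigl(\begin{smallmatrix}2&1\\1&-2\end{smallmatrix}\bigr)$, which contains a vector of square $-2$ and so is not in $\Lprime$ — a contradiction; hence $D=20$ and $\beta=\pm1$. Up to isomorphism the even lattices of rank $2$, signature $(1,1)$ and discriminant $-20$ are $\bigl(\begin{smallmatrix}2&0\\0&-10\end{smallmatrix}\bigr)$ and $\bigl(\begin{smallmatrix}4&2\\2&-4\end{smallmatrix}\bigr)$; the first contains the square‑$(-2)$ vector $(2,1)$, while the second represents no $-2$ (each of its square‑norms is divisible by $4$) and no $0$ ($5$ is not a square), so only $\bigl(\begin{smallmatrix}4&2\\2&-4\end{smallmatrix}\bigr)$ lies in $\Lprime$. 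Thus $S_X\cong\bigl(\begin{smallmatrix}4&2\\2&-4\end{smallmatrix}\bigr)$, i.e.\ $a=2$, $b=2$, $c=-2$ in Proposition \ref{PROP_key_prop}, and $u=18$ forces $v=\pm4$; substituting into $g_{u,v}$ gives $\varphi^*|_{S_X}=\bigl(\begin{smallmatrix}5&8\\8&13\end{smallmatrix}\bigr)$ when $v=4$ and its inverse $\bigl(\begin{smallmatrix}13&-8\\-8&5\end{smallmatrix}\bigr)$ when $v=-4$. Replacing $\varphi$ by $\varphi^{-1}$ if necessary (still fixed‑point‑free, still acting by $-\id$ on $T_X$), the pair $(X,\varphi)$ is exactly that of Theorem \ref{THM_Oguiso-example}.

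I expect the main obstacle to be the reduction carried out in the first two paragraphs — arguing cleanly that a fixed‑point‑free automorphism must be anti‑symplectic of infinite order with $\varphi^*|_{T_X}=-\id$ — since this is where fixed‑point‑freeness is genuinely used (through the vanishing of the holomorphic Lefschetz number) and where one must argue carefully with the Hodge‑theoretic characterisation of $T_X$. The lattice bookkeeping at the end is the most computation‑prone step, but it is routine given \cite[Table 15.2]{CS}.
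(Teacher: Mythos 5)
Your proof is correct and follows essentially the same route as the paper: the paper simply cites \cite{O} for the two facts you derive from scratch via the holomorphic and topological Lefschetz formulas (that $\varphi$ is anti-symplectic with $\tr(\varphi^*|_{S_X})=18$), and then performs the same lattice classification via Propositions \ref{PROP_key_prop} and \ref{PROP_inf_S_X} and \cite[Table 15.2]{CS}. The only cosmetic difference is at the very end: rather than replacing $\varphi$ by $\varphi^{-1}$, the paper conjugates $U^{-1}$ to $U$ by an explicit element $V\in O^+(S_X)$, which shows the original $\varphi$ itself has the stated form after a change of basis.
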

\begin{proof}
By \cite{O},
 we have $\tr(\varphi^*|_{S_X})=18$
 and $\varphi^* \omega_X=-\omega_X$.
%
Hence
 $\disc(S_X)=-20$ or $-5$
 by Proposition \ref{PROP_key_prop}.
From \cite[Table 15.2]{CS},
 we find that a lattice belonging to $\mathbb{L}_{(1,1)}$
 with discriminant $-20$ or $-5$
 is isomorphic to one of the following:
\begin{equation} 
 \begin{pmatrix} 4&2\\ 2&-4 \end{pmatrix},
 \begin{pmatrix} -2&0\\ 0& 10 \end{pmatrix},
 \begin{pmatrix} 2& 1 \\ 1 &-2 \end{pmatrix}.
\end{equation}
Since $S_X\in\Lprime$ by Proposition \ref{PROP_inf_S_X},
 we have
\begin{equation}
 S_X \cong
 \begin{pmatrix} 4&2\\ 2&-4 \end{pmatrix}, \quad
 \varphi^*|_{S_X} = U \text{~or~} U^{-1}, \quad
 U:=
 \begin{pmatrix} 5&8\\ 8&13 \end{pmatrix}
\end{equation}
 by Proposition \ref{PROP_key_prop}.
Here $U$ and $U^{-1}$ correspond to
 $(u,v)=(18,\pm 4)$ satisfying
 $u^2-20 v^2=4$.
Since $O^+(S_X)$ is isomorphic to the infinite dihedral group,
 $U$ and $U^{-1}$ are conjugate in $O^+(S_X)$.
In fact, we have
\begin{equation}
 V:=\begin{pmatrix} 1&0\\ 1&-1 \end{pmatrix} \in O^+(S_X), \quad
 V^{-1} \cdot U^{-1} \cdot V=U.
\end{equation}
Hence we may assume $\varphi^*|_{S_X}=U$
 by changing basis of $S_X$ if necessary.
We have $\varphi^*|_{T_X}=-\id$
 by Proposition \ref{PROP_inf_S_X}.
\end{proof}

\begin{remark} \label{REM_uniqueness-cayley-oguiso}
One can show that the action of a Cayley--Oguiso automorphism
 (as in Theorem \ref{THM_Oguiso-example})
 on the K3 lattice is essentially unique
 by applying Nikulin's lattice theory \cite{nikulin79int}.
Combined with Theorem \ref{THM_fixed-pint-free},
 this implies that a pair $(X,\varphi)$ of a K3 surface $X$
 with Picard number $2$
 and a fixed-point-free automorphism $\varphi \in \Aut(X)$
 is unique up to equivariant deformation.
Moreover, $X$ is realized as a determinantal quartic surface
 and $\varphi$ is constructed by using cofactor matrix.
See \cite{Cayley-Oguiso-auto} for details.
%
%
\end{remark}

\bigskip
Kenji Hashimoto

Graduate School of Mathematical Sciences,

The University of Tokyo,

3-8-1 Komaba, Maguro-ku, Tokyo, 153-8914, Japan

hashi@ms.u-tokyo.ac.jp

\bigskip
JongHae Keum

School of Mathematics,

 Korea Institute for Advanced Study,

 85 Hoegiro, Dongdaemun-gu, Seoul 02455, Korea

jhkeum@kias.re.kr

\bigskip
Kwangwoo Lee

Division of Liberal Arts and Science,

Gwangju Institute of Science and Technology,

123 Cheomdangwagi-ro, Buk-gu, Gwangju 61005, Korea

kwlee@gist.ac.kr
\end{document}